\def\b{\textbf{b}}
\def\R{\mathbb{R}}
\newcommand{\bmat}{\left[\begin{matrix}}
\newcommand{\emat}{\end{matrix}\right]}
\newtheorem{theorem}{Theorem}
\newtheorem{proposition}[theorem]{Proposition}
\newtheorem{conjecture}[theorem]{Conjecture}
\theoremstyle{remark}
\newtheorem*{remark}{Remark}
\theoremstyle{definition}
\newcommand{\Z}{\mathbb{Z}}
\newcommand{\vb}{{\mathbf b}}
    \title{LLL and stochastic sandpile models}
    \author{Jintai Ding, Seungki Kim, Tsuyoshi Takagi, Yuntao Wang}
\begin{document}

\begin{abstract}

The aim of the present paper is to suggest that statistical physics provides the correct language to understand the practical behavior of the LLL algorithm, most of which are left unexplained to this day. To this end, we propose sandpile models that imitate LLL with compelling accuracy, and prove for these models some of the most desired statements regarding LLL. We also formulate a few conjectures that formally capture our heuristics and would serve as milestones for further development of the theory.


\end{abstract}

\maketitle

\section{Introduction}

\subsection{The mysteries of LLL}

The LLL algorithm (\cite{LLL82}) is one of the most celebrated algorithmic inventions of the twentieth century, with countless applications to pure and computational number theory, computational science, and cryptography. It is also the most fundamental of lattice reduction algorithms, in that nearly all known reduction algorithms are generalizations of LLL in some sense, and they also utilize LLL as their subroutine. (We refer the reader to \cite{NV10} for a thorough survey on LLL and these related topics.) Thus it is rather curious that much of the observed behavior of LLL in practice is left totally unexplained, not even in a heuristic, speculative sense, even to this day.

The most well-known among the mysteries of LLL is the gap between its worst-case root Hermite factor(RHF) and the observed average-case, as documented in Nguyen and Stehl\'e (\cite{NS06}). It is a theorem from the original LLL paper (\cite{LLL82}) that the shortest vector of an LLL-reduced basis, with its determinant normalized to $1$, has length at most $(4/3)^{\frac{n-1}{4}} \approx 1.075^n$, whereas in practice one almost always observes $\approx 1.02^n$, regardless of the way in which the input is sampled. This is a strange phenomenon in the light of the work of Kim and Venkatesh (\cite{KV17}), which, roughly speaking, proves that, for almost every lattice, nearly all of its LLL bases have RHF close to the worst bound. This leads to the suspicion that the LLL algorithm must be operating in a complex manner that belies the simplicity of its code.

There are also many other phenomena regarding LLL that are unaccounted for. One is the geometric series assumption(GSA), originally proposed by Schnorr (\cite{S03}), and its partial failure at the boundaries, both of which are observed in other blockwise reduction algorithms as well e.g. BKZ (\cite{SE94}). There are also questions raised regarding the time complexity of LLL. Nguyen and Stehl\'e (\cite{NS06}) suggest that, in some situations, the average time complexity is lower than the worst-case, and in others, the worst-case is attained. The complexity of the optimal LLL algorithm --- i.e. the parameter $\delta$ equals $1$ --- is not proven to be polynomial-time, although observations suggest that it is (see Akhavi (\cite{A00}) and references therein).

\subsection{This paper}

Our main idea is that statistical physics may provide a correct language and concepts to study the practical behavior of the LLL algorithm. As we demonstrate throughout this paper, for each LLL phenomenon, there is a corresponding sandpile phenomenon, which can be captured and then studied with the already well-established methods of physics. Indeed, from the physical perspective, there is no reason not to regard the LLL algorithm as a proper member of the family of \emph{stochastic sandpile models}. Since the identification of the correct language often does much good in mathematics, this may open up a path to a systematic understanding of the various unexplained phenomena of LLL, via the mathematics and physics of the sandpile models.

Some similarities of LLL to sandpiles have been noticed previously and utilized to some extent. To the best of our knowledge, this is first pointed out in Madritsch and Vall\'ee (\cite{MV10}), and also in Vall\'ee (\cite{V16}), albeit briefly. Some aspects of this analogy have also been applied to BKZ as well --- see \cite{HPS11} and \cite{BSW18} for instance.

One of the new contributions made by the present paper is the introduction of stochastic sandpile models that are both impressively close to LLL (precisely speaking, its Siegel variant) and mathematically accessible. We propose two models of LLL, which we name \emph{LLL-SP} and \emph{SSP} respectively. LLL-SP (Algorithm \ref{alg:lllsp}) is a sandpile model that exhibits nearly identical quantitative behavior to that of LLL in many aspects, suggesting that the two algorithms operate under the same principles. This claim can be formulated in a precise language, in terms of the mixing property of the $\mu$ variables; see Conjecture \ref{conj:mu} below. SSP (Algorithm \ref{alg:ssp}) is a simple stochastic variant of the abelian sandpile model (ASM; Algorithm \ref{alg:asm}) that serves as a useful toy model for LLL that is mathematically far more tractable than LLL-SP, and still imitates the important aspects of the output statistics of LLL.

We also demonstrate that it is possible to prove some of the most desired statements for LLL on these models. On SSP, we can establish an upper bound on the average-case RHF that is significantly smaller than the worst-case (Theorem \ref{prop:ssprhf}; the proof is deferred to \cite{Kprep}). This seems much harder to achieve on LLL-SP, but here we are still able to provide a probabilistic lower bound on the time complexity (Theorem \ref{thm:lowertime}), whose order matches the well-known upper bound, and to resolve the optimal LLL problem (Theorem \ref{thm:optimal}). These results support our idea that the sandpile interpretation may be the correct approach to the real-world behavior of LLL.

In addition, it is worth noting that this physical perspective on LLL provides convincing heuristics on a number of LLL behavior, which are scattered throughout this paper. For instance, it explains why the output statistics of LLL appears independent of the input distribution, and has the shape as described by the GSA and its failure at the boundaries. Given that there has been not even a vague heuristic accounting for most practical behavior of LLL, this provides yet another good reason to pursue this line of development. Moreover, we try to capture the essence of these heuristics with formal language, with Conjecture \ref{conj:mu} below and the ``parallelepiped argument'' in the proof of Theorem \ref{prop:ssprhf} (also see Conjecture \ref{conj:mass}). These provide some concrete paths for further research.

\subsection{Cryptographic considerations}

LLL is of fundamental importance to lattice-based cryptography, and for this reason alone it deserves to be understood well. Our understanding of LLL may affect our understanding of all other reduction algorithms, particularly BKZ, the current standard method for challenging lattice-based systems.

Specifically, there are certain questions regarding LLL --- and any reduction algorithm in general --- that may pose a threat to lattice-based cryptography as a whole. For instance, it could be that a yet undiscovered small trick may improve the RHF of LLL without meaningfully increasing the time complexity, say, to $1.002$. Absurd as this may sound at first, recall that we already ``improved'' LLL from (RHF) $\approx 1.075$ to $\approx 1.02$ merely by implementing it, and that currently we lack the device to form even a vague argument against such catastrophic possibility.

It seems that the practitioners of lattice-based cryptography are well aware of such uncertainties as to the scope and limit of reduction algorithms, and are reacting accordingly. According to Tables 5--10 of \cite{ACD+18}, most lattice-based submissions to the recent NIST call for proposals (\cite{NIST}) claim about half or less as may bits of security as estimated with the state-of-art techniques. On the other hand, one observes no such reservation in submissions from multivariate cryptography, for example. By developing a framework for a systematic study of reduction algorithms, we hope to be able to relieve some of these concerns in the area of lattice-based cryptography.

\subsection{Assumptions and notations}

Throughout this paper, instead of the original LLL reduction from \cite{LLL82}, we work with its Siegel variant, a slight simplification of LLL. The Siegel reduction shares with LLL all its idiosyncrasies, and a bit easier to handle technically; hence a reasonable starting point for our research.

$n$ always means the dimension of the relevant Euclidean space. Our lattices in $\mathbb{R}^n$ always have full rank.

A basis $\mathcal{B}$, besides its usual definition, is an \emph{ordered} set, and we refer to its $i$-th element as $\vb_i$. Denote by $\vb_i^*$ the component of $\vb_i$ orthogonal to all vectors preceding it, i.e. $\vb_1, \ldots, \vb_{i-1}$. Also, for $i > j$, define $\mu_{i,j} = \langle \vb_i, \vb_j^* \rangle / \langle \vb_j^*, \vb_j^* \rangle$. Thus the following equality holds in general:
\begin{equation*}
\vb_i = \vb_i^* + \sum_{j=1}^{i-1} \mu_{i,j}\vb_j^*.
\end{equation*}

We will write for shorthand $\alpha_i := \|\vb_i^*\| / \|\vb_{i+1}^*\|$, and $Q_i = (\alpha_i^{-2} + \mu_{i+1,i}^2)^{-1/2}$. When discussing lattices, $r_i := \log \alpha_i$, and when discussing sandpiles, $r_i$ refers to the ``amount of sand'' at vertex $i$.

\section{Modeling LLL by a sandpile}

\subsection{The LLL algorithm}
We briefly review the LLL algorithm; for details, we recommend \cite{LLL82}, in which it is first introduced, and also \cite{JS98} and \cite{NV10}. A pseudocode for the LLL algorithm is provided in Algorithm \ref{alg:lll}. Be reminded that, whenever we mention LLL, we are really referring to its Siegel variant.

\begin{algorithm}
\caption{The LLL algorithm (Siegel variant)}\label{alg:lll}
\begin{enumerate}[1.]
\item[0.] Input: a basis $\mathcal{B} = \{\vb_1, \ldots, \vb_n\}$ of $\mathbb{R}^n$, a parameter $\delta < 0.75$
\item while true, do:
\item \hspace{4mm} Size-reduce $\mathcal{B}$.
\item \hspace{4mm} (Lov\'asz test) choose the lowest $k \in \{1, \ldots, n-1\}$ such that $\delta\|\vb_{k}^*\|^2 > \|\vb_{k+1}^*\|^2$
\item \hspace{4mm} if there is no such $k$, break
\item \hspace{4mm} swap $\vb_{k}$ and $\vb_{k+1}$ in $\mathcal{B}$
\item Output $\mathcal{B} = \{\vb_1, \ldots, \vb_n\}$, a $\delta$-reduced LLL basis.
\end{enumerate}
\end{algorithm}

\begin{proposition} \label{prop:post_swap}
After carrying out Step 5 in Algorithm \ref{alg:lll}, the following changes occur:
\begin{enumerate}[(i)]
\item $\alpha_{k-1}^{new} = Q_k\alpha_{k-1}$
\item $\alpha_k^{new} =  Q_k^{-2}\alpha_k$
\item $\alpha_{k+1}^{new} =  Q_k\alpha_{k+1}$
\item $\mu_{k, k-1}^{new} =  \mu_{k+1, k-1}$
\item $\mu_{k+1, k}^{new} =  Q_k^2\mu_{k+1,k}$
\item $\mu_{k+2, k+1}^{new} =  \mu_{k+2,k} - \mu_{k+2,k+1}\mu_{k+1,k}$
\item $\mu_{k,l}^{new} = \mu_{k+1,l}, \mu_{k+1,l}^{new} = \mu_{k,l}$ for $1 \leq l \leq k-1$
\item $\mu_{l,k}^{new} = \mu_{l,k+1} - \mu_{l,k+1}\mu_{k+1,k}\mu_{k+1,k}^{new} + \mu_{l,k}\mu_{k+1,k}^{new}$ for $l \geq k+2$
\item $\mu_{l,k+1}^{new} = \mu_{l,k} - \mu_{l,k+1}\mu_{k+1,k}$ for $l \geq k+2$
\end{enumerate}
and there are no other changes. The superscript ``new'' refers to the corresponding variable after the swap.
\end{proposition}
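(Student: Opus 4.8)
The plan is to track how the Gram–Schmidt data transforms under the elementary operation of Step 5, namely swapping $\vb_k$ and $\vb_{k+1}$. The key point is that size-reduction has already been carried out in Step 2, so all $\mu_{i,j}$ with $i > j$ satisfy $|\mu_{i,j}| \le 1/2$; but more importantly, the swap is a purely local operation on the flag of subspaces $V_i := \Span(\vb_1, \ldots, \vb_i)$: only $V_k$ changes (to $\Span(\vb_1, \ldots, \vb_{k-1}, \vb_{k+1})$), while every $V_i$ with $i \ne k$ is unaffected. Consequently $\vb_i^*$ is unchanged for $i \notin \{k, k+1\}$, which immediately localizes where changes can occur and explains the ``no other changes'' clause. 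First I would set up notation: write $\vb_k^* {}^{new}$ and $\vb_{k+1}^* {}^{new}$ for the new Gram–Schmidt vectors spanning the same plane $\Span(\vb_k^*, \vb_{k+1}^*)$ inside $V_{k+1}/V_{k-1}$, and record the two basic identities $\vb_{k+1}^{old} = \vb_{k+1}^* + \mu_{k+1,k}\vb_k^*$ (the projection of the old $\vb_{k+1}$ into this plane) and $\vb_k^{old} = \vb_k^*$ modulo $V_{k-1}$.

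The core computation is the $2$-dimensional one. After the swap the new $\vb_k$ is the old $\vb_{k+1}$, so $\vb_k^* {}^{new}$ is the component of $\vb_{k+1}^{old}$ orthogonal to $V_{k-1}$, which is $\vb_{k+1}^* + \mu_{k+1,k}\vb_k^*$; taking norms gives $\|\vb_k^* {}^{new}\|^2 = \|\vb_{k+1}^*\|^2 + \mu_{k+1,k}^2\|\vb_k^*\|^2$, and dividing by $\|\vb_k^*\|^2$ yields exactly $Q_k^{-2}$, from which (i)–(iii) follow by chasing the definitions $\alpha_{k-1} = \|\vb_{k-1}^*\|/\|\vb_k^*\|$, $\alpha_k = \|\vb_k^*\|/\|\vb_{k+1}^*\|$, $\alpha_{k+1} = \|\vb_{k+1}^*\|/\|\vb_{k+2}^*\|$ together with $\|\vb_{k-1}^*\|$ and $\|\vb_{k+2}^*\|$ being fixed and the product $\|\vb_k^*\|\cdot\|\vb_{k+1}^*\|$ being fixed (the determinant of the plane is an invariant). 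For (v), $\mu_{k+1,k}^{new} = \langle \vb_k^{old}, \vb_k^* {}^{new}\rangle / \|\vb_k^* {}^{new}\|^2$; since $\vb_k^{old} = \vb_k^* + (\text{stuff in } V_{k-1})$ and $\vb_k^* {}^{new} = \vb_{k+1}^* + \mu_{k+1,k}\vb_k^*$, the inner product is $\mu_{k+1,k}\|\vb_k^*\|^2$, and dividing by $\|\vb_k^* {}^{new}\|^2$ multiplies by $Q_k^2$. Then $\vb_{k+1}^* {}^{new}$ is determined as $\vb_k^{old}$ minus its $\vb_k^* {}^{new}$-component, i.e. $\vb_k^* - \mu_{k+1,k}^{new}\vb_k^* {}^{new}$; this expression feeds into (vi), (viii), (ix).

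For the remaining identities I would expand each new $\mu$ via its definition $\mu_{i,j}^{new} = \langle \vb_i^{new}, \vb_j^* {}^{new}\rangle/\|\vb_j^* {}^{new}\|^2$ and substitute: (iv) and (vii) are immediate because for $l \le k-1$ the vector $\vb_l^*$ is unchanged and $\vb_k^{new} = \vb_{k+1}^{old}$, $\vb_{k+1}^{new} = \vb_k^{old}$, so the $\mu$'s are simply swapped (and $\mu_{k,k-1}^{new} = \mu_{k+1,k-1}$ is the $l=k-1$ case of (vii) read off against the unchanged $\vb_{k-1}^*$). Identity (vi) comes from computing $\langle \vb_{k+2}^{old}, \vb_{k+1}^* {}^{new}\rangle$ using $\vb_{k+1}^* {}^{new} = \vb_k^* - \mu_{k+1,k}^{new}\vb_k^* {}^{new}$ and the relations among old $\mu$'s, together with $\|\vb_{k+1}^* {}^{new}\| = \|\vb_{k+1}^*\|$ up to the determinant bookkeeping; (viii) and (ix), for $l \ge k+2$, are obtained the same way, expanding $\vb_l^{old}$ in the old Gram–Schmidt basis, projecting onto the new $\vb_k^* {}^{new}$ and $\vb_{k+1}^* {}^{new}$, and collecting terms — this is where one must be careful, since $\vb_k^* {}^{new}$ and $\vb_{k+1}^* {}^{new}$ are genuine linear combinations of both $\vb_k^*$ and $\vb_{k+1}^*$, so each old coefficient $\mu_{l,k}, \mu_{l,k+1}$ contributes to both new ones. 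Finally, one checks that $\vb_l^*$ and hence every other $\mu$ is untouched for $l \notin\{k,k+1\}$ acting as a first index and $l \notin \{k-1,k,k+1\}$ as patterns allow, which gives the ``no other changes'' assertion. The main obstacle is purely bookkeeping: keeping the $2\times 2$ change-of-basis on the plane $\Span(\vb_k^*,\vb_{k+1}^*)$ consistent across all nine formulas, especially the signs and the interaction terms in (viii)–(ix); there is no conceptual difficulty, only the risk of arithmetic slips, which I would guard against by verifying the determinant identity $\|\vb_k^* {}^{new}\|\,\|\vb_{k+1}^* {}^{new}\| = \|\vb_k^*\|\,\|\vb_{k+1}^*\|$ and the consistency of $\vb_k^{new} = \vb_k^* {}^{new} + \mu_{k+1,k}^{new}\vb_k^* {}^{new}\cdot 0 + \ldots$ as a sanity check at the end.
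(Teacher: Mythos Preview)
Your proposal is correct and is exactly the ``straightforward calculations'' the paper alludes to; the paper itself gives no details beyond citing \cite{LLL82}, so your plan of localizing the change to the plane $\Span(\vb_k^*,\vb_{k+1}^*)$ via the flag $V_i$, computing $\vb_k^{*\,new}$ and $\vb_{k+1}^{*\,new}$ there, and then reading off each $\mu$ and $\alpha$ from the definitions is precisely what is intended.
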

\begin{proof}
Straightforward calculations (see e,g, \cite{LLL82}).
\end{proof}

\subsection{Sandpile basics}

We also briefly review the basics of the sandpile models. For references, see Dhar (\cite{D99}, \cite{D06}) or Perkinson (\cite{P14}). 

A sandpile model is defined on a finite graph $\mathcal{G}$, with one distinguished vertex called the \emph{sink}. In the present paper, we only concern ourselves with the cycle graph, say $A_n$, consisting of vertices $\{v_1, \ldots, v_n\}$ and one unoriented edge for each adjacent pair $v_i$ and $v_{i+1}$. We also consider $v_1$ and $v_n$ as adjacent. We designate $v_n$ as the sink.

A \emph{configuration} is a function $r : \{v_1, \ldots, v_n\} \rightarrow \mathbb{R}$. Just as reduction algorithms work with bases, sandpile models work with configurations. We write for short $r_i = r(v_i)$. One may think of $r_i$ as the amount or \emph{height} of the pile of sand placed on $v_i$.

\begin{figure}
\includegraphics[scale=1.2]{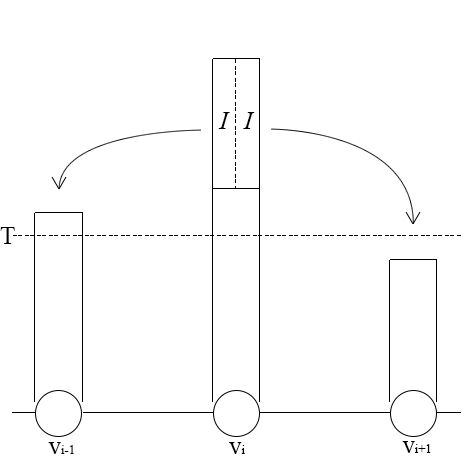}
\caption{An illustration of a (legal) toppling $T_i$.} \label{fig:toppling}
\end{figure}

Just as LLL computes a reduced basis by repeatedly swapping neighboring basis vectors, sandpiles compute a \emph{stable configuration} by repeated \emph{toppling.} Let $T, I \in \mathbb{R}_{>0}$. A configuration is \emph{stable} if $r_i \leq T$ for all $i \neq n$. A \emph{toppling operator} $T_i$ ($i \neq n$) replaces $r_i$ by $r_i - 2I$, and $r_{i-1}$ by $r_{i-1} + I$ and $r_{i+1}$ by $r_{i+1} + I$. An illustration is provided in Figure \ref{fig:toppling}. Applying $T_i$ when $r_i > T$ is called a \emph{legal toppling}. By repeatedly applying legal topplings, all excess ``sand'' will eventually be thrown away to the sink, and the process will terminate.

In our paper, $T$ --- \emph{threshold} --- will always be a fixed constant, but $I$ --- \emph{increment} --- could be a function of the current configuration, or a random variable, or both. In the former case, we say that the model is \emph{nonabelian} --- otherwise \emph{abelian}. In the second case, we say that the model is \emph{stochastic}. The (non-stochastic) abelian sandpile theory is quite well-developed, with rich connections to other fields of mathematics --- see e.g. \cite{L10}. Other sandpile models are far less understood, especially the nonabelian ones.

\subsection{The LLL sandpile model}

Motivated by Proposition \ref{prop:post_swap}, especially the formulas (i) -- (iii), we propose the following Algorithm \ref{alg:lllsp}, which we call the \emph{LLL sandpile model}, or LLL-SP for short. 

\begin{algorithm}
\caption{The LLL sandpile model (LLL-SP)}\label{alg:lllsp}
\begin{enumerate}[1.]
\item[0.] Input: $\alpha_1, \ldots, \alpha_{n} \in \mathbb{R}$, $\mu_{2,1}, \ldots, \mu_{n,n-1} \in [-0.5,0.5]$, a parameter $\delta < 0.75$
\item Rewrite $r_i : = \log \alpha_i$, $\mu_i := \mu_{i+1,i}$ $T := -0.5\log \delta$
\item while true, do:
\item \hspace{4mm} choose the lowest $k \in \{1, \ldots, n-1\}$ such that $r_k > T$
\item \hspace{4mm} if there is no such k, break
\item \hspace{4mm} subtract $2\log Q_k$ from $r_k$
\item \hspace{4mm} add $\log Q_k$ to $r_{k-1}$ (if $k-1 \geq 1$) and $r_{k+1}$ (if $k+1 \leq n-1$)
\item \hspace{4mm} (re-)sample $\mu_{k-1}, \mu_k, \mu_{k+1}$ uniformly from $[-0.5,0.5]$
\item Output: real numbers $r_1, \ldots, r_{n-1} \leq T$
\end{enumerate}
\end{algorithm}

The only difference between LLL (Algorithm \ref{alg:lll}) and LLL-SP (Algorithm \ref{alg:lllsp}) lies in the way in which the $\mu$'s are replaced after each swap or topple. Our experimental results below demonstrate that this change hardly causes any difference in their behavior. A theoretical perspective is discussed at the end of this section.

The decision to sample $\mu_i$'s uniformly is largely provisional, though some post hoc justification is provided in Figure \ref{fig:seq}. One could refine the model by updating $\mu_i$'s with the formulas in Proposition \ref{prop:post_swap}, and then re-sampling $\mu_{i+2,i}$'s uniformly.

\subsection{Numerical comparisons}

Figure \ref{fig:output} shows the average shape of the output bases and configurations by LLL and LLL-SP. We also ran the same experiments where the method of choosing $k$ is tweaked, e.g. choose randomly among eligible $k$'s. We omitted them here due to limited space, but the same lessons are obtained anyway.

For each dimension $n = 80, 100, 120$, we ran each algorithm 5,000 times with the same set of input bases of determinant $\approx 2^{10n}$, generated using the standard method suggested in Section 3 of \cite{NS06}. A point $(i, y)$ in each plot indicates that the average of $r_i := \log \alpha_i$'s over those 5,000 outputs equal $y$. We used fpLLL (\cite{FPLLL}) for the LLL algorithm.

One easily observes that the algorithms yield nearly indistinguishable outputs. In particular, since RHF can be computed directly from the $r_i$'s by the formula

\begin{equation} \label{eq:rhf}
\mbox{RHF} = \exp\left(\frac{1}{n^2}\sum_{i=1}^{n-1}(n-i)r_i\right),
\end{equation}
we expect both algorithms to yield about the same RHF. Indeed, Figure \ref{fig:RHFdist} shows that the RHF distribution of LLL and LLL-SP are in excellent agreement.

\begin{figure} 
\centering
\includegraphics[scale=0.30]{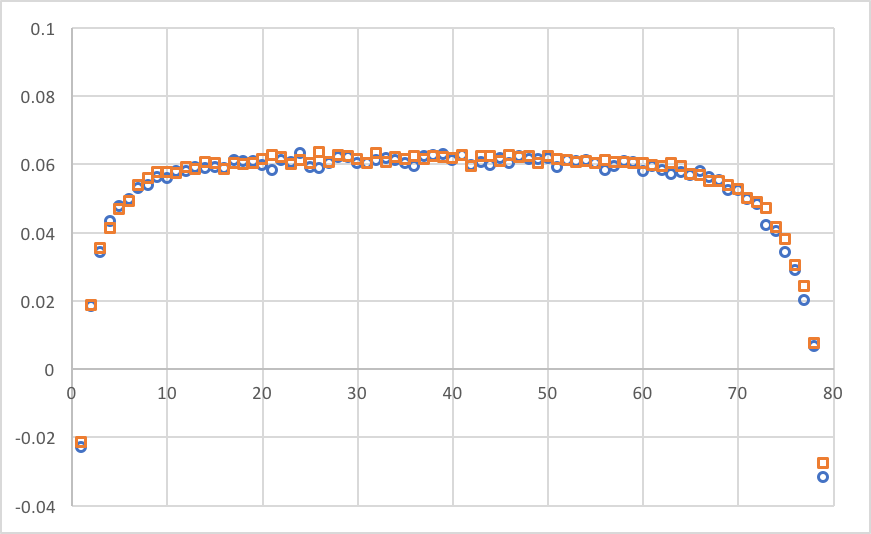} 
\includegraphics[scale=0.30]{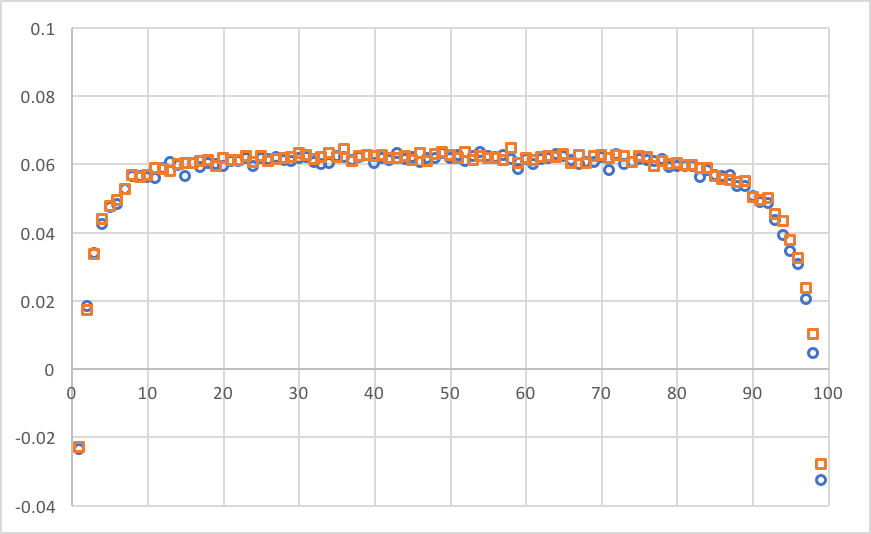} 
\includegraphics[scale=0.30]{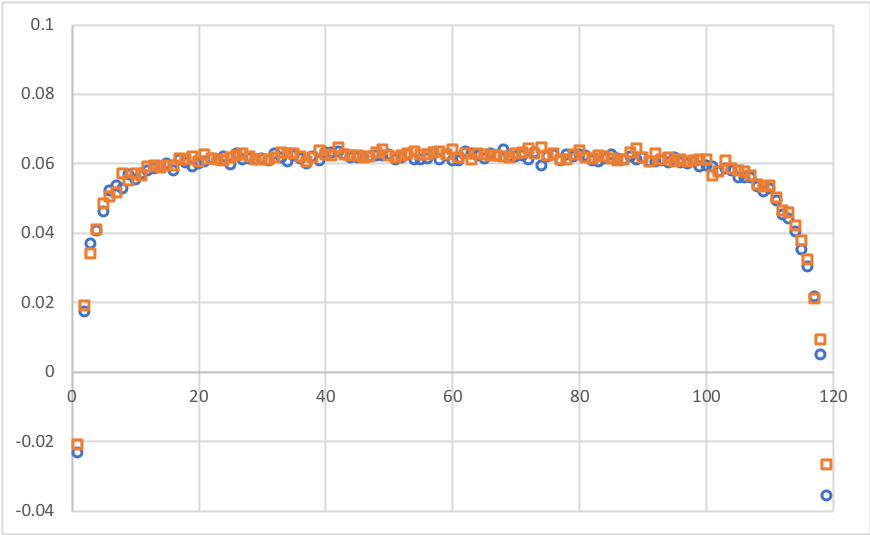} 
\caption{Average output of LLL (orange square) and LLL-SP (blue circle) in dimensions 80, 100, and 120.
} \label{fig:output}
\end{figure}

\begin{figure} 
\centering
\includegraphics[scale=0.40]{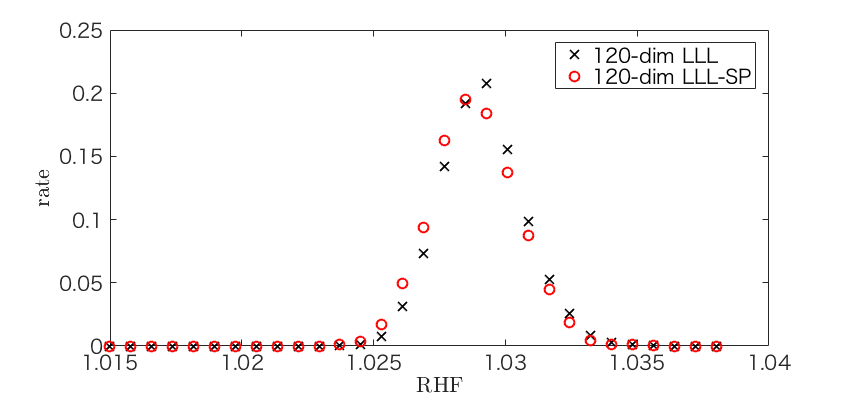}
\caption{RHF distributions of LLL and LLL-SP in dimension 120. We obtain the same results in other dimensions.} \label{fig:RHFdist}
\end{figure} 

The resemblance of the two algorithms runs deeper than on the level of output statistics. See Figure \ref{fig:seq}, which depicts the plot of points $(i, Q_{k(i)}^{-2})$ and $\mu_{k(i)+1, k(i)} = \mu_{k(i)}$ as we ran LLL and LLL-SP on dimension 80, where $k(i)$ is $k$ chosen at $i$-th iteration.\footnote{We have the same results in higher dimensions, but they are too cumbersome to present here.} The two plots are again indistinguishable, yet another evidence that LLL and LLL-SP possess nearly identical dynamics.


\begin{figure} 
\centering
\includegraphics[scale=0.47]{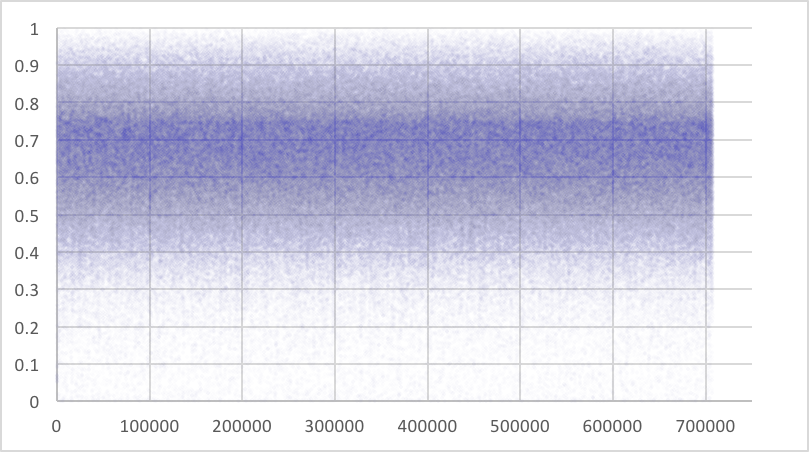} \includegraphics[scale=0.47]{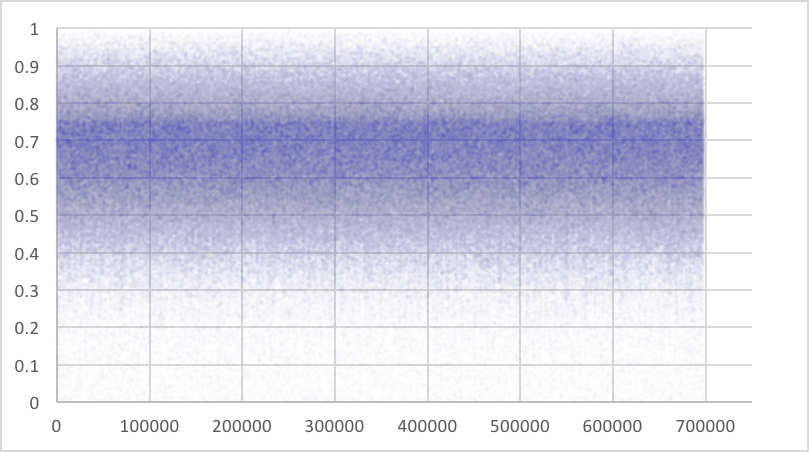}

\includegraphics[scale=0.47]{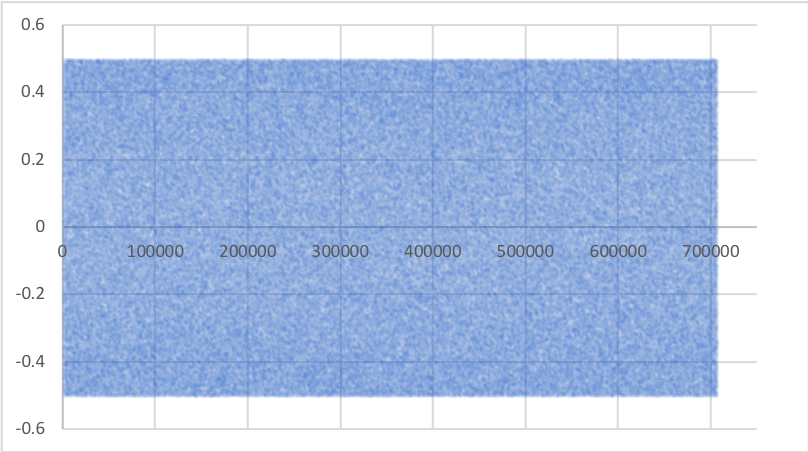} \includegraphics[scale=0.47]{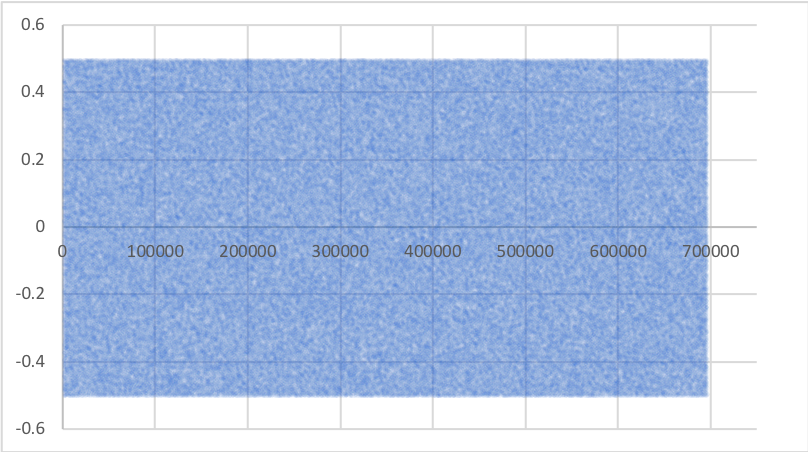}

\caption{Plot of $Q_{k(i)}^{-2}$(top) and $\mu_{k(i)+1, k(i)} = \mu_{k(i)}$(bottom) during a typical run of LLL(left) and LLL-SP(right) in dimension 80.} \label{fig:seq}
\end{figure} 

\subsection{Discussion}

The only difference between LLL and LLL-SP has to do with the way they update the $\mu_k (= \mu_{k+1,k})$'s. For LLL-SP, the $\mu_k$-variables are i.i.d. and independent of the $r_k$-variables. For LLL, $\mu_k$ is determined by a formula involving its previous value and $r_k$. However, it seems plausible that the $\mu_k$'s in LLL, as a stochastic process, is \emph{mixing}, which roughly means that they are close to being i.i.d, in the sense that a small perturbation in $\mu_k$ causes the next value $\mu_k^{new}$ to become near unpredictable. Numerically, this is robustly supported by the graphs at the bottom of Figure \ref{fig:seq}. Theoretically, our intuition comes from the fact that the formula $\mu_{k}^{new} = \mu_{k}/(\mu_k^2 + \alpha_k^{-2})$ (mod 1) is an approximation of the Gauss map $x \mapsto \{1/x\}$, which is well-known to have excellent mixing properties (see e.g. Rokhlin (\cite{R61}) and the references in Bradley (\cite{B05}) for more recent works).

The above discussion can be summarized and formulated in the form of a mathematical conjecture, which can then be considered a rigorous version of the statement ``LLL is essentially a sandpile model.'' Below is our provisional formulation of such a conjecture.

\begin{conjecture} \label{conj:mu}
Choose a distribution $\mathcal{D}$ on the set of bases in $\mathbb{R}^n$, to be used to sample inputs for LLL. Define $k(i)$, as earlier, to be the index of the pile toppled at $i$-th iteration. Then $k(i)$ is a random variable depending on the input distribution, and so is $\mu_{k(i)}$. Then, if $\mathcal{D}$ is ``generic,'' then

\begin{enumerate}[(i)]
\item $(|\mu_{k(i)}|)_{i = 1, 2, \ldots}$ is strongly mixing as a stochastic process. (Roughly speaking, this means $|\mu_k(N)|$ is nearly independent of $|\mu_k(M)|$ for which $N-M$ is large; see the text \cite{B95} for a precise definition.)
\item each $|\mu_{k(i)}|$ is contained in a compact subset $S$ of the set of all probability density functions on $[0, 0.5]$ with respect to the $L^\infty$-norm. $S$ is independent of the dimension, the input distribution, or any other variable.
\end{enumerate}
\end{conjecture}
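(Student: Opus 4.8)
This is stated as a conjecture, not a theorem, and I do not expect a complete proof; what follows is the plan of attack I would pursue and where I believe the essential obstruction lies. The strategy is to reduce part (i) to the ergodic theory of a \emph{random dynamical system} generated by Gauss-like interval maps, and part (ii) to a uniform, dimension-free invariance property of the associated transfer operators.

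First I would fix the input distribution $\mathcal{D}$ and study the joint process $X_N := (\mathbf{r}(N), k(N), \mu_{k(N)}(N))$, where $\mathbf{r}(N)$ denotes the configuration after $N$ topplings and $k(N)$ the toppled index. By Proposition \ref{prop:post_swap}(v) together with size-reduction, the toppled coordinate updates by $\mu_k \mapsto \mu_k/(\mu_k^2 + \alpha_k^{-2}) \bmod 1$, folded into $[-1/2,1/2]$; this is a perturbation of the Gauss map $x \mapsto \{1/x\}$, the perturbation being controlled by $\alpha_k^{-2} = e^{-2 r_k}$. A toppling at $k$ occurs only when $r_k > T = -\tfrac12\log\delta$, so $\alpha_k^{-2} < \delta < 3/4$ at every relevant step, meaning the family of maps that actually gets composed is \emph{uniformly} expanding and stays a definite distance from the degenerate case $\alpha_k^{-2}=0$. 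This is the structural fact that makes a spectral-gap argument plausible.

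The main obstacle is that $k(N)$ and the value $r_{k(N)}(N)$ --- which jointly select the map applied at step $N$ --- are \emph{not} independent of the earlier $\mu$'s in genuine LLL: everything is coupled through the trajectory. What one needs is that this coupling runs only in the benign direction, i.e. that the toppling dynamics sees the $\mu$'s only through coarse, slowly varying quantities ($\mathbf{r}$ moves by $O(\log Q_k)$ per step, a bounded function of $\mu_k$), while the arithmetic digits of $\mu_k$ that drive the mixing are randomized on a faster timescale. Made precise, this should let one couple $(\mu_{k(N)}(N))_N$ to a random composition of maps $\psi_\alpha$, after which the $\alpha$-mixing of (i) follows from a Ruelle--Perron--Frobenius analysis --- an annealed version of the Gauss--Kuzmin--Wirsing operator --- yielding a spectral gap uniformly over $\alpha^{-2}\le\delta$ (cf. \cite{R61}). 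I would first execute this on the toy model SSP, where the $r$-dynamics is genuinely abelian and the toppled index is not fed back from the $\mu$'s, so the needed decorrelation should be provable outright; then I would try to transport the argument to LLL-SP and finally to LLL, with ``theorem'' degrading to ``conjecture'' precisely at the steps where the feedback cannot be controlled.

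For part (ii) I would exhibit a convex, $L^\infty$-compact cone $S$ of probability densities on $[0,1/2]$ --- for instance densities whose logarithm has Lipschitz constant below a fixed bound --- tailored so that it contains the Gauss density $\propto 1/(1+x)$ and is forward-invariant under every transfer operator $(\mathcal{L}_\alpha f)(x) = \sum_{y:\,\psi_\alpha(y)=x} f(y)/|\psi_\alpha'(y)|$ coming from an admissible $\alpha$. Dimension-independence of $S$ is then automatic, since each toppling touches only a single one-dimensional map and $n$ never enters the estimates. The genericity hypothesis on $\mathcal{D}$ would be used only to guarantee that the law of the initial $\mu$'s lies in (or is drawn into) $S$ and that no arithmetic degeneracy --- an input lattice forcing eventually periodic $\mu$-orbits, say --- destroys mixing; fixing the correct meaning of ``generic,'' presumably full Lebesgue measure in a natural parametrization or a Diophantine-type condition, is itself part of the problem, and I suspect not the easy part.
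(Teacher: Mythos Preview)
The paper does not prove this statement; Conjecture~\ref{conj:mu} is left open, and the only argument offered is the heuristic remark that $\mu_k^{new}=\mu_k/(\mu_k^2+\alpha_k^{-2})\pmod 1$ approximates the Gauss map, together with the numerical evidence in Figure~\ref{fig:seq}. Your proposal is therefore not competing against a proof but against a paragraph of intuition, and it goes substantially further than the paper does: you name the relevant machinery (random compositions of expanding interval maps, a Ruelle--Perron--Frobenius/Gauss--Kuzmin--Wirsing transfer-operator analysis, an invariant cone of log-Lipschitz densities for part~(ii)), and you correctly isolate the real obstruction --- the feedback from the $\mu$-process into both the choice of $k(i)$ and the parameter $\alpha_{k(i)}$ that selects which map is applied. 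The paper acknowledges this obstruction only implicitly, by hedging with the word ``generic'' and conceding that fixing its meaning is part of the problem; your suggestion to first carry the argument through on SSP, then LLL-SP, then LLL is exactly in the spirit of how the paper uses those models elsewhere.

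One technical point in your plan deserves care. The toppling condition $r_k>T$ gives $\alpha_k^{-2}<\delta$, which is an \emph{upper} bound, not a lower one; $\alpha_k^{-2}$ can be arbitrarily close to $0$ when $r_k$ is large, so the family $\{\psi_\alpha\}$ is not bounded away from the Gauss-map limit (harmless for mixing) and is not compact in the parameter. More to the point, near the other end --- $\alpha_k^{-2}$ close to $\delta$ and $\mu$ close to $1/2$ --- the derivative of $\mu\mapsto\mu/(\mu^2+\alpha_k^{-2})$ drops below $1$ in absolute value, so ``uniformly expanding'' is not literally true pointwise. This does not kill a spectral-gap approach, but the uniformity you need for part~(ii) will have to come from the transfer operators (e.g.\ a Lasota--Yorke inequality holding uniformly over admissible $\alpha$) rather than from pointwise expansion bounds.
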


Ideally, Conjecture \ref{conj:mu} is to be designed so that what is provable for LLL-SP would also be provable for LLL by an analogous argument (e.g. the theorems in Section 4), while retaining the flexibility as to what the correct distribution of $\mu_k$ might be. It is to be updated accordingly as our understanding of LLL and LLL-SP progresses. Then at some point Conjecture \ref{conj:mu} may come within reach of mathematics; or maybe it already is.

As for the expression ``generic'' in the statement of Conjecture \ref{conj:mu}, it is hard to pin down its precise meaning at this point, as is sometimes the case in mathematics. But there are two criteria that a generic $\mathcal{D}$ must fulfill, which we suspect are also sufficient conditions. First, most of the samples from $\mathcal{D}$ must not ruin the mixing property of $\mu^{new}_k$; a counterexample is a basis of a lattice with a huge discrepancy between two successive minima $\lambda_k \ll \lambda_{k+1}$, which would cause $\mu_{k(i)}$ to be abnormally small whenever $k(i) = k$. The second criterion concerns the shape of $\mathrm{supp}\,\mathcal{D}$, which we illustrate in the next section.

\section{Abelian sandpile analogue of LLL}

The drawback of LLL-SP as a model of LLL is that, being nonabelian, it is difficult to study. The existing literature on nonabelian sandpile models is rather thin, and to the best of our knowledge, there is no theoretical treatment on this subject. We hope that our work here provides some motivation to pursue nonabelian models in depth.

In this section, we introduce a certain abelian stochastic sandpile model that we named SSP, which is in a sense an abelianized version of LLL-SP. A priori, SSP appears completely unrelated to LLL. Surprisingly, though, its average output shape turns out to be extremely close to that of LLL. Moreover, SSP admits a mathematical theory that is analogous to that of ASM developed by Dhar (\cite{D90}, see also \cite{D06}), which is developed in a separate paper by the second-named author (\cite{Kprep}). This allows one to prove some pleasant statements such as Theorem \ref{prop:ssprhf}. Therefore, SSP is a useful toy model that could yield insights into some of the most prominent features of the output statistics of LLL. It may yield some hints as to how to start analyzing the non-abelian LLL-SP as well.

\subsection{Background on ASM}

To facilitate reader's understanding, we briefly describe the abelian sandpile model (ASM), the most basic of sandpile models, and parts of its theory that is relevant to us. Its pseudocode is provided in Algorithm \ref{alg:asm}. See Dhar (\cite{D90}), where the theory is originally developed, or Perkinson (\cite{P14}) for an articulate and readable exposition.

\begin{algorithm}
\caption{Abelian sandpile model (ASM)}\label{alg:asm}
\begin{enumerate}[1.]
\item[0.] Input: $r_1, \ldots, r_{n-1} \in \mathbb{Z}$, parameters $T, I \in \mathbb{Z}$, $0 < I \leq T/2$
\item while true, do:
\item \hspace{4mm} choose a $k \in \{1, \ldots, n-1\}$ such that $r_k > T$
\item \hspace{4mm} if there is no such k, break
\item \hspace{4mm} subtract $2I$ from $r_k$
\item \hspace{4mm} augment $I$ to $r_{k-1}$ and $r_{k+1}$
\item Output: integers $r_1, \ldots, r_{n-1} \leq T$
\end{enumerate}
\end{algorithm}

The important ASM concepts for us are that of the \emph{recurrent configurations} and the \emph{steady state}. Let $M$ be the set of all stable (non-negative) configurations of ASM. Given two configurations $r, s \in M$, we have the operation
\begin{equation*}
r \oplus s = \mbox{(stabilization of $r+s$)},
\end{equation*}
which is the outcome of ASM with input being the configuration $r+s$ defined by $(r+s)_i = r_i + s_i$ for each $i$. Unlike LLL, the output of ASM is independent of the choice of toppling order --- hence the term ``abelian'' --- and thus $\oplus$ is well-defined. This operation makes $M$ into a commutative monoid.

Define $g \in M$ to be the configuration with $g_1 = 1$ and $g_2 = \ldots = g_{n-1} = 0$. We call $r \in M$ \emph{recurrent} if
\begin{equation*}
\underbrace{g \oplus \ldots \oplus g}_{m\, \mathrm{times}} = \mbox{$r$ for infinitely many $m$}.
\end{equation*}

One can actually take any $g$ for which at least one $g_i$ is coprime to the g.c.d. of $T$ and $I$ (this condition is nothing but only to avoid concentration on a select few congruence classes). Equivalently, with LLL in mind, we can also define that $r$ is recurrent if there exist infinitely many input configurations such that their stabilization results in $r$. It is a theorem that the set $R$ of the recurrent configurations of ASM forms a group under $\oplus$.

One may ask, given an $r \in R$, what is the proportion of $m \in \Z_{>0}$ that satisfies $g \oplus \ldots \oplus g\, (m\, \mathrm{times}) = r$? It turns out that the answer is $1/|R|$ for any $r \in R$, that is, each element of $R$ has the same chance of appearing. This distribution, say $\rho$, on $R$ is called the \emph{steady state} of the system. And the phrase \emph{average output shape} that we have been using in the empirical sense obtains a formal definition as $\sum_{r \in R} \rho(r)r$. The steady state is unique in the following sense: choose an $r \in R$ according to $\rho$, and take any configuration $s$; then the stabilization of $r+s$, which is a random variable, has distribution $\rho$.

Equivalently, this means that, in terms of our second definition of recurrent, if we sample the input configuration for ASM from a ``generic'' distribution e.g. uniformly from a large rectangular set of $\Z_{>0}^{n-1} \cong$ (the space of nonnegative configurations), we would obtain each $r \in R$ with about probability $\rho(r)$. To elaborate, in the case of ASM, the set of inputs whose output becomes a given $r \in R$ is contained in a coset $r + \Lambda$ of a sublattice $\Lambda \subseteq \Z^{n-1}$. So the word ``generic'' here means ``containing about the same number of each coset representatives of $\Lambda$.'' This is essentially the second condition for genericity that we mentioned in the discussion after Conjecture \ref{conj:mu}.

\subsection{Introduction to SSP}

A pseudocode for SSP is provided in Algorithm \ref{alg:ssp}. This is exactly the same as ASM, except for Step 4, which determines the amount of sand to be toppled at random. The decision to sample from the uniform distribution is an arbitrary one; we could have chosen something else, and much of the discussion below still apply.

\begin{algorithm}
\caption{Stochastic sandpile (SSP)}\label{alg:ssp}
\begin{enumerate}[1.]
\item[0.] Input: $r_1, \ldots, r_{n-1} \in \mathbb{Z}$, parameters $T, I \in \mathbb{Z}$, $0 < I \leq T/2$
\item while true, do:
\item \hspace{4mm} choose a $k \in \{1, \ldots, n-1\}$ such that $r_k > T$
\item \hspace{4mm} if there is no such k, break
\item \hspace{4mm} sample $\gamma$ uniformly from $\{1, \ldots, I\}$
\item \hspace{4mm} subtract $2\gamma$ from $r_k$
\item \hspace{4mm} augment $\gamma$ to $r_{k-1}$ and $r_{k+1}$
\item Output: integers $r_1, \ldots, r_{n-1} \leq T$
\end{enumerate}
\end{algorithm}

The average output shape of this stochastic sandpile model (SSP) is shown in Figure \ref{fig:sspoutput}. Figure \ref{fig:sspoutput} not only shares all the major characteristics of Figure \ref{fig:output}, but they are also quantitatively alike. The values $r_i$'s are nearly identical in the middle, which gradually decreases as $i$ approaches the boundary, starting at around $i = 15$ and $n - 15$. Furthermore, in both figures, the differences between the threshold and the middle values, and the differences between the middle and the boundary value, are equal; in the case of SSP, it is $\approx I/4$, and for LLL, it equals about $0.08$. Outside the paradigm we are developing here, this should come as quite surprising. Algorithms \ref{alg:lll} and \ref{alg:ssp} are simply so different that there is no reason to expect that anything similar would come out of them.

This finding suggests that sandpiles may shed light on the geometric series assumption(GSA) and its partial failure at the boundary, yet another important unexplained phenomenon of LLL-like reduction algorithms. Indeed, ``GSA'' is a quite general phenomenon that is fairly well-understood in statistical physics, via the finite-size scaling theory (see e.g. \cite{GDM16}). We will explore this connection in a forthcoming paper.

\begin{figure}
\centering
\includegraphics[scale=0.6]{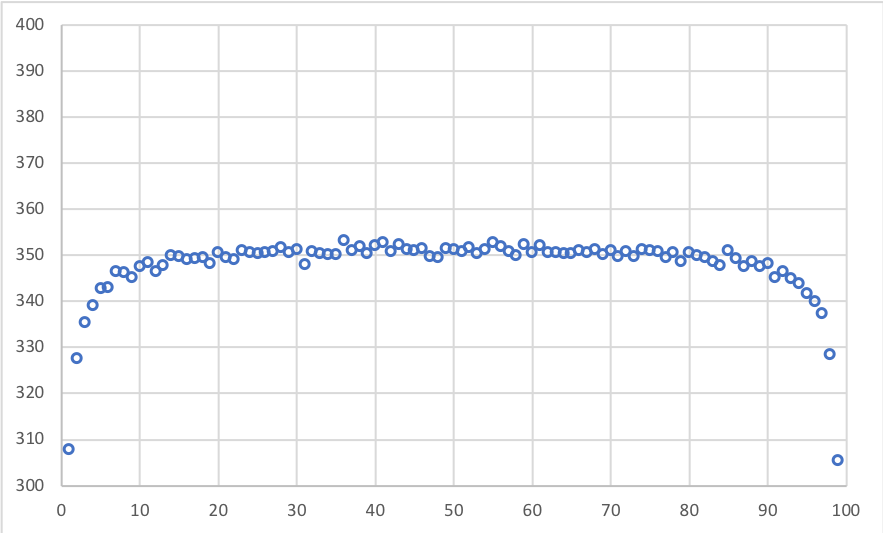}
\caption{Average output of SSP, $n = 100$, $I = 200$ and $T = 400$.} \label{fig:sspoutput}
\end{figure}

\subsection{Mathematical properties of SSP}

A mathematical theory of SSP closely analogous to that of ASM has been recently developed (\cite{Kprep}), largely motivated by the experimental result above. Every aspect of the above-mentioned ASM theory has its appropriate counterpart in the SSP theory, except that instead of configurations one works with a distribution on the set of configurations, due to its stochastic nature. Most importantly, SSP, like ASM, possesses the unique steady state. Figure \ref{fig:sspoutput} is a reflection of that steady state of SSP, from which one can compute its average ``RHF'' via \eqref{eq:rhf}.

As some readers might have noticed in the earlier discussion, the notion of the steady state alone clarifies numerous aspects of the practical behavior of LLL. The number ``$1.02$'' obtains a neat mathematical meaning as a certain invariant of the steady state. Moreover, that the steady state is unique --- i.e. it is the only attractor in the associated dynamics --- explains why the number $1.02$ seems independent of how the input bases are sampled for LLL, an observation in \cite{NS05} that is also declared as a conjecture there. Therefore, most of the frequently asked questions regarding the average output of LLL --- why $1.02$, why GSA, why independent of input sampling --- comes down to studying the quantitative properties of the steady state.

There are two difficulties. First, we do not yet know how to prove that LLL indeed possesses a steady state, even upon fixing the order of toppling. Second, studying the steady state is nontrivial, even for ASM or SSM, where it reduces to tricky problems in combinatorics. Still, we can prove some of the statements for SSP that one wishes of LLL. For example, it is possible to rigorously bound the average RHF of SSP from above:

\begin{theorem}\label{prop:ssprhf}
The worst-case $\log\mathrm{(RHF)}$ of SSP is $T/2 + o_n(1)$. The average $\log\mathrm{(RHF)}$ of SSP is bounded from above by $T/2 - I/2e^2 + o_n(1)$.
\end{theorem}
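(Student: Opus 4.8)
The plan is to work directly with the steady-state distribution $\rho$ of SSP, taking from the SSP theory of \cite{Kprep} (the analogue of Dhar's ASM theory) the existence and uniqueness of $\rho$, a ``burning''-type characterization of recurrent configurations, and an expression for $\rho$ as a measure on configurations. By \eqref{eq:rhf}, $\log(\mathrm{RHF})$ is a \emph{linear} functional of the configuration, so the average of $\log(\mathrm{RHF})$ over SSP outputs is $\frac1{n^2}\sum_{i=1}^{n-1}(n-i)\,\bar r_i$ with $\bar r_i:=\mathbb{E}_\rho[r_i]$, and its worst case is $\max\bigl\{\frac1{n^2}\sum_{i=1}^{n-1}(n-i)r_i : r\ \text{recurrent}\bigr\}$. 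This settles the worst-case statement: a recurrent configuration is stable, so $r_i\le T$ for $i\neq n$, whence $\log(\mathrm{RHF})\le\frac1{n^2}\sum_{i=1}^{n-1}(n-i)T=\frac{T(n-1)}{2n}=\frac T2+o_n(1)$ (with $I,T$ fixed), and equality holds up to $o_n(1)$ for the maximal configuration $r\equiv T$, which is recurrent --- adding $I$ grains to each of $v_1,v_{n-1}$ and stabilizing with every increment equal to $I$ topples each $v_i$ exactly once and returns $r$.

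For the average-case bound it suffices to prove the \textbf{bulk estimate}: there is a constant $C=C(I)$, independent of $n$, such that $\bar r_i\le T-I/e^2+o_n(1)$ uniformly for $C\le i\le n-C$. Granting this, using that the weights $n-i$ are positive, that $\sum_{C\le i\le n-C}(n-i)=\tfrac{n^2}2-O_I(n)$, and that $\bar r_i\le T$ on the $O_I(1)$ boundary indices, one gets
\[
\frac1{n^2}\sum_{i=1}^{n-1}(n-i)\bar r_i\;\le\;\frac1{n^2}\Bigl[\bigl(T-\tfrac I{e^2}\bigr)\!\!\!\sum_{C\le i\le n-C}\!\!\!(n-i)\;+\;T\!\!\!\sum_{i<C\ \text{or}\ i>n-C}\!\!\!(n-i)\Bigr]\;=\;\frac T2-\frac I{2e^2}+o_n(1).
\]

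The bulk estimate is the heart of the matter, and is where the \emph{parallelepiped argument} enters. From the SSP theory, the $\rho$-weight of a recurrent configuration $r$ --- equivalently the asymptotic density, among all inputs, of those stabilizing to $r$ --- is controlled by the size of a parallelepiped $P(r)\subseteq\mathbb{R}^{n-1}$ built from the local toppling data along $r$, and the length of $P(r)$ in the direction conjugate to coordinate $i$ shrinks as $r_i\uparrow T$, since the $\gamma$-sequences that produce a configuration with $r_i$ near the threshold are correspondingly squeezed. Summing these weights over the recurrent $r$ with $r_i$ pinned at height $T-t$ and rewriting as a Laplace transform yields a Chernoff-type bound on the lower tail of $T-r_i$, whose rate is governed by the moment generating function of the (at most two) Uniform$\{1,\dots,I\}$ increments $v_i$ absorbs from its neighbors; integrating this tail and optimizing the Chernoff parameter gives $\mathbb{E}_\rho[T-r_i]\ge I\bigl(\sup_{\lambda>0}\lambda e^{-\lambda}\bigr)^2=I/e^2$, the square recording the two neighbors of $v_i$ and $e$ entering through $\sup_{\lambda>0}\lambda e^{-\lambda}=e^{-1}$.

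I expect the bulk estimate to be the genuine obstacle, for two reasons. First, one needs the dependence of $P(r)$ on $r$ to be local enough --- a decay-of-correlations statement for the SSP steady state --- so that the $r_i$-marginal at a bulk vertex does not feel the boundary and the error terms above are uniform in $i$; proving such decoupling for a \emph{stochastic} sandpile is delicate, and is presumably what Conjecture \ref{conj:mass} isolates in clean form. Second, landing the sharp constant $e^{-2}$ rather than a cruder absolute constant requires the envelope and its optimization to be carried through without slack, which is where any looseness in the parallelepiped estimates would surface; this part is essentially bookkeeping once the decoupling is in hand. The bound is not tight --- the true bulk value is $\approx T-I/4>T-I/e^2$ --- so pushing $e^{-2}$ toward $\tfrac14$ is the natural next step, which Conjecture \ref{conj:mass} should also address.
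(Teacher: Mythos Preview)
Your worst-case computation is fine and matches the paper. For the average case, however, your route diverges substantially from the paper's, and the part you flag as the ``genuine obstacle'' is precisely the part the paper's argument is designed to avoid.

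The paper does \emph{not} attempt to bound the marginal expectations $\bar r_i$. Instead it proves a single global fact about the steady state: the maximum pointwise density of $\rho$ on the space of stable configurations is $\approx (I/2)^{-(n-1)}$, attained at the corner $(T,\dots,T)$. This comes from the parallelepiped argument in the paper's sense --- tracking the distribution of $T_1\cdots T_{n-1}r$ as a whole, diagonalizing the commuting push operators, and reading off the density at the corner --- not from any local analysis near a single vertex $v_i$. Given that density bound, the average-case estimate is pure counting: write $r_i=T-s_i$, so $\log(\mathrm{RHF})>\alpha$ becomes $\sum_i (n-i)s_i<(T/2-\alpha)n^2$, and the number $N(\alpha)$ of nonnegative integer points in that simplex is $\approx \bigl((T/2-\alpha)n^2\bigr)^{n-1}/((n-1)!)^2$. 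The condition $N(\alpha)\cdot(I/2)^{-(n-1)}\to 0$ then reduces, via Stirling, to $2(T/2-\alpha)e^2/I<1$, i.e.\ $\alpha>T/2-I/2e^2$. So the $e^2$ is Stirling's constant coming from two factorials, not from a Chernoff optimization or from ``two neighbors.''

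Your reduction of the theorem to the bulk estimate $\bar r_i\le T-I/e^2$ is arithmetically sound, and the inequality is empirically true (indeed $\bar r_i\approx T-I/4$; note your inequality at the end is reversed, since $I/4>I/e^2$ gives $T-I/4<T-I/e^2$). But your sketch does not actually prove it: the Chernoff heuristic with $(\sup_\lambda \lambda e^{-\lambda})^2$ is suggestive numerology rather than a derivation, and, as you yourself note, making it rigorous would require a decay-of-correlations statement for the SSP steady state that is at least as hard as the theorem itself. The paper's global density-plus-counting argument sidesteps all of this: it never needs to understand a single marginal, only the sup of the joint density. What you would gain from your route, if it could be completed, is genuine information about the profile $i\mapsto\bar r_i$ (and hence about GSA); what you lose is a proof that currently closes.
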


We note that empirically one observes $\log\mathrm{(RHF)} \approx T/2 - I/8$ on average.

\begin{proof}[Sketch (and discussion) of proof]

This is essentially Proposition 8 of \cite{Kprep}. We present the idea of the proof for completeness; it takes only a moderate amount of labor to turn it into a formal argument. 

\begin{figure}
\centering
\includegraphics[scale=0.3]{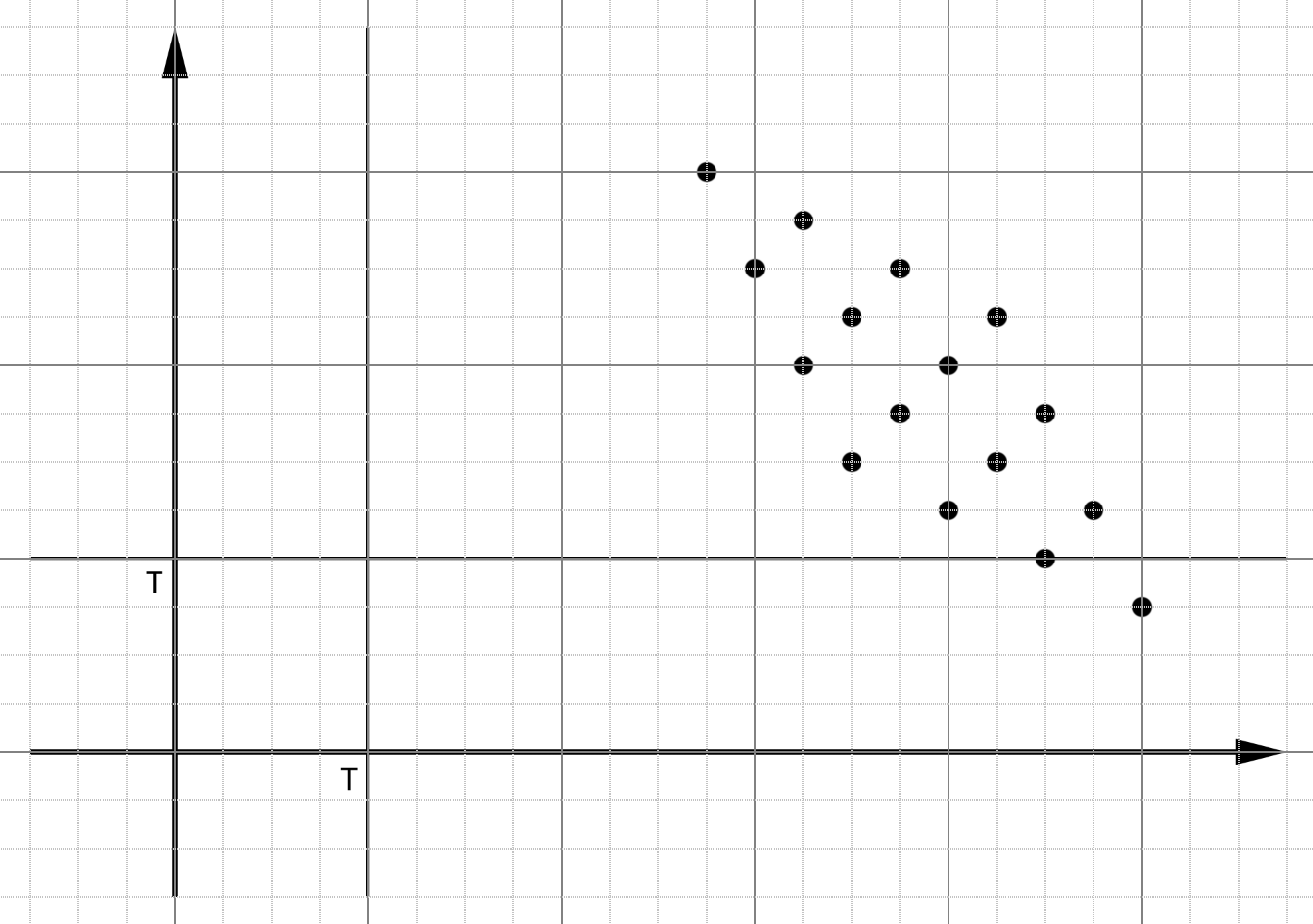} \vspace{3mm}

\includegraphics[scale=0.3]{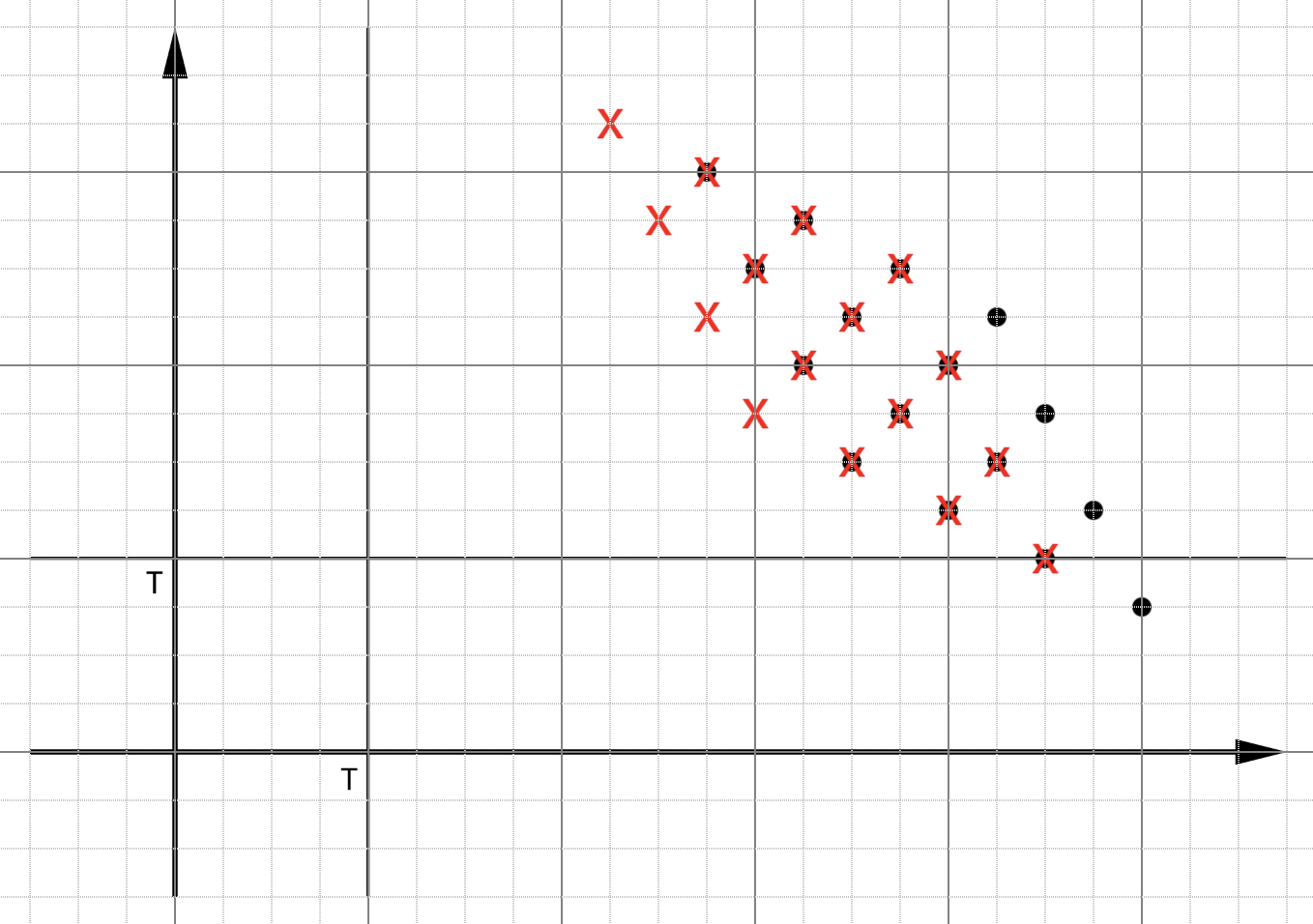} \vspace{3mm}

\includegraphics[scale=0.3]{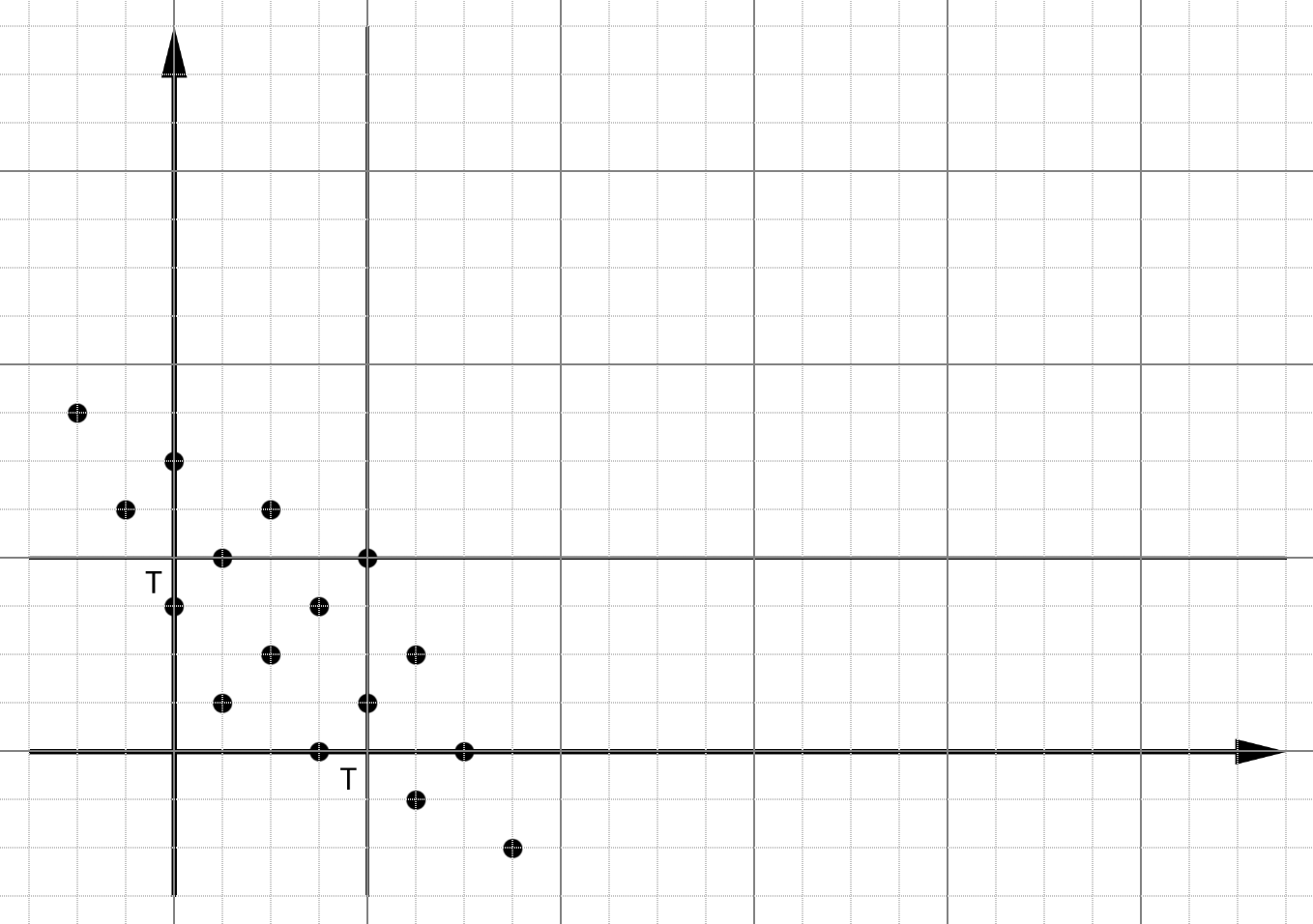} \vspace{2mm}

\caption{The parallelepiped argument.} \label{fig:par}
\end{figure} 

Take an unstable configuration $r$. If $r$ is sufficiently far away from the origin in the configuration space, we must topple on each and every vertex at least once --- in fact, arbitrarily many times --- on the way of stabilizing $r$. So consider $T_1T_2 \ldots T_{n-1}r$, where $T_i$ is the toppling operator on vertex $i$. Since the outcome of SSP is a random variable, $T_1T_2 \ldots T_{n-1}r$ should be thought of as a probability distribution on the configuration space. As such, it is a distribution that is supported on a parallelepiped-shaped cluster, as illustrated in the top of Figure \ref{fig:par} in case $n = 3$ and $I = 4$; the upper-right vertex in the parallelogram is $r - (1, 1, \ldots, 1)$, which is $r$ toppled on every vertex by the minimum possible amount.

Applying $T_i$ to this parallelepiped-shaped distribution amounts to ``pushing'' the parallelepiped in the direction of $i$, resulting in another parallelepiped-shaped distribution. The middle graph in Figure \ref{fig:par} illustrates this process, by indicating with x marks the outcome of applying $T_1$ to the original distribution (assuming that the horizontal axis represents $r_1$). Repeating, we eventually reach the situation as in the bottom of Figure \ref{fig:par}, where none of the $T_i$ would preserve the shape of the parallelepiped, since $(T, T, \ldots, T)$ is already a stable configuration and thus $T_i$ leaves it there. From this point on, it is rather tricky to describe the action of the $T_i$'s, which is the source of the difficulty of studying the steady state of SSP.

However, we claim that, for any $r$ sufficiently far enough from the origin, the distribution on the parallelepiped obtained by the time the upper-right corner reaches $(T, \ldots, T)$ is arbitrarily close to a certain limiting distribution $\wp$. To see this, consider the action of $T_i$ on the distribution on the parallelepiped, while forgetting the information about where that parallelepiped is located in the configuration space. Then one notices that each $T_i$ acts as a linear operator on the space of such distributions. Simultaneously diagonalizing all $T_i$'s --- possible because they pairwise commute --- one finds that $1$ is the single largest eigenvalue of multiplicity one, whose corresponding eigenvector is $\wp$. Upon repeated applications of $T_i$'s, the components corresponding to the lesser eigenvalues converge to zero, proving the claim.

Observe that, when the upper-right corner equals $(T, \ldots, T)$ (again see bottom of Figure \ref{fig:par}), fully stabilizing $\wp$ yields the steady state. $\wp$ itself is easily computed by hand, but the steady state is not. Still, we can prove, using the information about $\wp$, that the maximum density of the steady state occurs at $(T, \ldots, T)$, and that its value equals $\approx (I/2)^{-(n-1)}$. Note that this is enough to deduce a nontrivial upper bound on the average RHF: estimate the number $N(\alpha)$ of stable configurations whose $\log\mbox{(RHF)}$ are greater than $\alpha$, and take $\alpha$ such that $N(\alpha) \cdot (I/2)^{-(n-1)}$ vanishes as $n \rightarrow \infty$. It turns out we can choose $\alpha = T/2 - I/2e^2$.

\end{proof}

There are a couple of difficulties in directly applying the above argument to LLL (even assuming Conjecture \ref{conj:mu}) or LLL-SP. For instance, because the increment depends on the $r_i$'s for those systems, the effect of $T_i$ is not as neat as illustrated in Figure \ref{fig:par}. It would push the side of the parallelepiped with ``uneven force,'' skewing the shape of the parallelepiped. Specifically, for LLL or LLL-SP, the higher $r_i$ is, the greater is the toppling induced by $T_i$, so the upper side is pushed further than the lower side. Technically speaking, since the $T_i$'s depend on the $r_i$-coordinate it is impossible to separate the parallelepiped from the coordinate space as we have done above, and since the $T_i$'s do not commute we cannot use linear algebra to find the attractor of the dynamics. These are the obstacles to proving the existence of the steady state of LLL.

To prove that the ``average'' RHF is bounded strictly away from the worst-case, it is sufficient to show a much weaker statement that the maximum density of the output distribution is not too large. This seems feasible yet quite vexing; we state it as a conjecture below for future reference. As in the SSP case, we expect that the maximum density is attained on the upper-right corner, and the $T_i$'s perturb it at most marginally, once they are iterated sufficiently many times.

\begin{conjecture} \label{conj:mass}
For a generic distribution $\mathcal{D}$ on the set of bases of $\R^n$, the probability density function of the corresponding output distribution $\mathcal{D}^\circ$ of LLL (or LLL-SP) is bounded from above by a constant $C$ that depends only on $n$.
\end{conjecture}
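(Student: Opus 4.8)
The plan is to carry out, for LLL-SP, the analogue of the ``parallelepiped argument'' behind Theorem~\ref{prop:ssprhf}, but only in the weaker form that bounds the density of the output; for LLL itself one prepends a reduction to LLL-SP via Conjecture~\ref{conj:mu}, treating $(\mu_{k(i)})_i$ as exogenous noise whose laws stay in the compact family of Conjecture~\ref{conj:mu}(ii). \emph{Step 1.} A generic $\mathcal D$ is, up to negligible mass, a mixture of configurations $r=(r_1,\dots,r_{n-1})$ lying far from the origin (this is the genericity discussed after Conjecture~\ref{conj:mu}), so $\mathcal D^\circ$ is a mixture of the output laws $\Phi(r)$ of LLL-SP, and it suffices to bound $\|\Phi(r)\|_\infty$ by a constant $C(n)$ uniformly over far-from-origin $r$. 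For such $r$, stabilization topples \emph{every} vertex many times, which is what supplies the smoothing below.

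\emph{Step 2 (the smoothing core).} View the output as a deterministic function of $r$ and the i.i.d.\ uniform noise $(\mu_j)_{j\ge1}$ used at successive topplings. Each toppling at vertex $k$ adds to the configuration the vector $c\,d_k$, where $d_k=e_{k-1}-2e_k+e_{k+1}$ (with $e_0=e_n=0$, the sink dropped) and $c=\log Q_k$ is, given the current height $r_k$, a random variable with a fresh uniform $\mu_k$ behind it. The law of $\log Q_k$ is supported on an interval of length $\asymp r_k$ and, for every $p<2$, has $L^p$-norm bounded uniformly in $r_k>T$; it is not in $L^\infty$ because it carries an integrable $\sim x^{-1/2}$ singularity at its top (where $\mu_k\to0$). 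The displacement directions $d_1,\dots,d_{n-1}$ are linearly independent --- the tridiagonal matrix $D=(d_1\,|\,\cdots\,|\,d_{n-1})$ has $|\det D|=n$ --- so, conditioning on all but a suitably chosen $O_n(1)$-element subset of the noise (at least three topplings at each vertex), a Young-type estimate composed in these $n-1$ independent directions, together with the change of variables of Jacobian $1/|\det D|=1/n$, pushes the output law into $L^\infty$ with a bound depending only on $n$. Geometrically this is exactly the statement that $T_1T_2\cdots T_{n-1}$ (iterated) spreads a point mass into a parallelepiped-shaped distribution of bounded density.

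\emph{Step 3 (stabilization, and the steady state).} Once a bounded density has been reached, the remaining topplings --- those near the ``upper-right corner'' $(T,\dots,T)$, in the mixed boundary regime --- are again convolutions against $L^p$-bounded ($p<2$) kernels composed with unimodular shears, so each changes the density by at most a bounded factor and only $O_n(1)$ of them occur per vertex in that regime; hence $\|\Phi(r)\|_\infty\le C(n)$, and by Step~1, $\|\mathcal D^\circ\|_\infty\le C(n)$. This realizes the expectation voiced above that ``the maximum density is attained on the upper-right corner and the $T_i$'s perturb it at most marginally.'' (For the sharper statements --- uniqueness of the steady state and its explicit description via a limiting shape $\wp$ --- one further needs to diagonalize the $T_i$'s as in SSP; since here they neither commute nor act linearly, that would have to go through an honest coupling argument showing trajectories from distinct far starts coalesce modulo the translation $(t,\dots,t)$, which is beyond what Conjecture~\ref{conj:mass} requires.)

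\emph{The main obstacle} is making Step~2 uniform and unconditional. The difficulty has no counterpart in the abelian theory: because LLL-SP topples at the \emph{lowest} eligible $k$, the actual toppling sequence on a far input is a complicated, data-dependent schedule rather than clean sweeps, so one must carve out the $O_n(1)$ probing topplings and argue their joint effect on the output is non-degenerate despite (i) heights that may become pathologically large mid-run --- which both weakens the per-step smoothing and is precisely what the ``no huge gap $\lambda_k\ll\lambda_{k+1}$'' part of genericity must exclude --- and (ii) the interdependence of the increments across neighboring vertices, through which the noise at one probing toppling re-enters other coordinates via later topplings; controlling the resulting Jacobian is the heart of the matter. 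For the LLL (not LLL-SP) form of the conjecture, the preliminary reduction through Conjecture~\ref{conj:mu} is the part furthest from current technique.
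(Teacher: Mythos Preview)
The statement you are attempting is labeled a \emph{conjecture} in the paper, and the paper offers no proof for it. The surrounding discussion explicitly says the claim ``seems feasible yet quite vexing'' and records it ``for future reference''; the obstacles the paper names --- that the increment $\log Q_k$ depends on the current height $r_k$, and that the toppling operators $T_i$ do not commute --- are precisely the ones you flag in your ``main obstacle'' paragraph. So there is nothing to compare against: you are sketching a plan of attack on an open problem, not reproducing or paralleling an existing argument.

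As a plan, your outline is broadly consonant with the paper's heuristics (the parallelepiped picture from the proof-sketch of Theorem~\ref{prop:ssprhf}), and your accounting of the difficulties in Step~2 is accurate. But Step~3 contains an additional unjustified claim that you do not list among the obstacles: that in the ``boundary regime'' near $(T,\dots,T)$ only $O_n(1)$ further topplings occur per vertex, and that each such toppling changes the density by at most a bounded factor. Neither is clear. The number of residual topplings depends on how the spread mass meets the stable region, and since a toppling at $k$ can re-excite $k\pm1$, a cascade of length not controlled by $n$ alone is not obviously ruled out. More seriously, a single toppling in LLL-SP is \emph{not} a convolution --- its kernel depends on the state through $r_k$ --- so there is no general principle guaranteeing it preserves an $L^\infty$ bound up to a constant; a state-dependent random map can concentrate mass. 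This is the same state-dependence issue you raise for Step~2, and it bites again here: Step~3 is not the soft landing you present it as. In short, the proposal identifies the right mechanism (smoothing by fresh noise in $n-1$ linearly independent directions) and most of the right difficulties, but it remains a programme rather than a proof, which is exactly the status the paper assigns to the statement.
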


\section{Regarding time complexity}

Although expanding the SSP theory, and Theorem \ref{prop:ssprhf} in particular, to LLL-SP seems challenging for the time being, we are able to prove some attractive statements for LLL-SP with respect to its complexity, which we present below. We also consider their extensions to LLL assuming the truth of Conjecture \ref{conj:mu}.

\subsection{A lower bound}

The theorem below gives a probabilistic \emph{lower} bound on the complexity of LLL-SP, which agrees up to constant factor with the well-known upper bound. There are two ingredients in the proof: (i) measuring the progress of the LLL algorithm by the quantity \emph{energy}, a well-known idea from the original LLL paper (\cite{LLL82}) (ii) bounding the performance of LLL-SP by a related SSP.

\begin{theorem} \label{thm:lowertime}
Consider LLL-SP, and an input configuration $r$ whose \emph{log-energy} $E = E(r)$, defined by
\begin{equation*}
E(r) = \sum_{j=1}^{n-1} \sum_{i=j}^{n-1} (n-i)r_i,
\end{equation*}
is sufficiently large --- in fact, $E > 10H$ works, with $H$ defined as in \eqref{eq:enbound}. Then the probability that LLL-SP is not terminated in $E/4$ steps is at least $1 - CE^{-1/2}$ for an absolute constant $C > 0$.
\end{theorem}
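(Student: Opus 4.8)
The plan is to track the \emph{log-energy} $E$ as a supermartingale-like quantity and control how quickly it can decrease under the stochastic dynamics of LLL-SP. First I would compute, using Proposition \ref{prop:post_swap}(i)--(iii) rewritten in the $r_i = \log\alpha_i$ variables, the exact change $\Delta E$ in a single toppling step at index $k$. Since a topple at $k$ replaces $r_k$ by $r_k - 2\log Q_k$ and adds $\log Q_k$ to each of $r_{k-1}, r_{k+1}$, one gets a telescoping expression: $\Delta E$ is $-\log Q_k$ times a linear combination of the weights $(n-i)$ appearing in $E$, which collapses to something of the form $-c\log Q_k^{-1}$ for a bounded positive constant $c$ (independent of $n$, modulo boundary corrections when $k=1$ or $k=n-1$). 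Here $\log Q_k^{-1} = \tfrac12\log(\alpha_k^{-2}+\mu_k^2) \le \tfrac12\log(\delta^{-1}+1/4)$ is bounded above, using $r_k > T$ i.e. $\alpha_k^{-2} < \delta$, so each step decreases $E$ by at most an absolute constant. Fixing the normalization of $E$ as stated, I expect this bound to read exactly $\Delta E \ge -2$ per step (or whatever constant makes $E/4$ the right count), so that deterministically LLL-SP cannot terminate before roughly $E/(\text{const})$ steps \emph{if} $E$ genuinely has to drop all the way from its initial value to its terminal value.

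The subtlety the theorem is addressing with the probabilistic $1 - CE^{-1/2}$ factor is that $E$ at termination is not necessarily small — it could be large if the output configuration itself has large $r_i$'s — and more importantly, a topple does \emph{not} always decrease $E$: when $\mu_k$ is close to $\pm 1/2$ and $\alpha_k^{-2}$ is close to $\delta$, $\log Q_k^{-1}$ can be tiny or even the sign of $\Delta E$ could be delicate at the boundary terms. So the real content is a lower bound on how much $E$ must have \emph{net} decreased, which requires showing the terminal energy is not too large and that the fluctuations of the $\mu$-dependent increments do not conspire to let $E$ fall faster than average. This is where I would bring in ingredient (ii): dominate the $r_i$-dynamics of LLL-SP by those of an auxiliary SSP with a matching increment range, for which the analogous energy bound and the bound $H$ on terminal energy (the quantity \eqref{eq:enbound} referenced but not shown in the excerpt) are available. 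The $E/4$ with $E > 10H$ leaves a constant-fraction gap between start and finish, so the deterministic per-step bound already forces $\gtrsim E/(\text{const})$ steps; the point of $E/4$ specifically is presumably that after absorbing all the slack one still has at least $E/4$ forced topples with overwhelming probability.

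The genuinely probabilistic part — and the step I expect to be the main obstacle — is handling the event that a \emph{particular} run gets unlucky and $E$ decreases substantially faster than its typical rate, because on some steps the re-sampled $\mu_k \sim \mathrm{Unif}[-1/2,1/2]$ happens to make $\log Q_k^{-1}$ large repeatedly. I would control this by a concentration argument: write $E_{\text{init}} - E_{\text{after } m \text{ steps}} = \sum_{j=1}^m X_j$ where $X_j = c\log Q_{k(j)}^{-1}$, note $0 \le X_j \le$ const, and observe that conditioned on the current configuration the distribution of $X_j$ has expectation bounded by a constant $\bar c$ — this uses that $\mu_{k(j)}$ is freshly uniform, so $\mathbb{E}[\log(\alpha_{k(j)}^{-2} + \mu_{k(j)}^2)\mid \mathcal{F}_{j-1}]$ is an explicit integral, bounded uniformly. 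Then $\sum X_j - \bar c\, m$ is a martingale with bounded increments, and Azuma–Hoeffding gives $\Pr[\sum_{j=1}^{E/4} X_j \ge E_{\text{init}} - H] \le \exp(-\Omega(E))$, or more crudely Chebyshev gives the stated $CE^{-1/2}$ tail — which is likely why the weaker polynomial bound appears in the statement (it's cheaper and suffices). Turning "$E$ cannot have dropped by more than $E_{\text{init}} - H$ total, and it drops by at most $\bar c$ per step on average" into "at least $E/4$ steps occur with probability $\ge 1 - CE^{-1/2}$" is then bookkeeping with the constants; the one place to be careful is the boundary vertices $k \in \{1, n-1\}$, where the toppling formula loses a term, and the $\mu$-resampling in Step 8 of Algorithm \ref{alg:lllsp} touches $\mu_{k-1}, \mu_k, \mu_{k+1}$ rather than just $\mu_k$, so the conditional-expectation computation must account for the correct filtration. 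I would also need the elementary fact that $E > 0$ is preserved (or at least stays $\ge H$) so that the "sufficiently far from origin, so every vertex topples many times" heuristic from the SSP discussion can be invoked to justify that the dominating SSP really does run long enough.
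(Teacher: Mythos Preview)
Your proposal has the right skeleton --- track the log-energy $E$, bound its per-step decrement, and use concentration --- but the central computation is wrong, and this breaks the argument. You claim the per-step energy drop $2\log Q_k = -\log(\alpha_k^{-2}+\mu_k^2)$ is bounded above by an absolute constant. It is not: when the Lov\'asz test fails we only know $\alpha_k^{-2} < \delta$, with no lower bound, so $\alpha_k^{-2}+\mu_k^2$ can be arbitrarily close to $0$ and the energy drop is unbounded. (The inequality you wrote, with $\delta^{-1}+1/4$, should read $\delta+1/4$; but either way it gives only an \emph{upper} bound on $\alpha_k^{-2}+\mu_k^2$, hence a \emph{lower} bound on the energy drop --- that is the classical termination bound, the wrong direction for this theorem.) Consequently there is no deterministic $E/\mathrm{const}$ lower bound on the number of steps, your increments $X_j$ are not bounded, and Azuma--Hoeffding does not apply. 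Your worry that the terminal energy ``could be large'' is also misplaced: at termination every $r_i\le T$, so $E\le H$ deterministically; this part is easy.

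The missing idea is the one-line domination $2\log Q_k \le \log\mu_k^{-2}$, obtained by dropping the nonnegative term $\alpha_k^{-2}$. This decouples the bound from the configuration entirely: the total energy drop after $N$ steps is at most $F_N=\sum_{i=1}^N \log\mu_{k(i)}^{-2}$, a sum of genuinely i.i.d.\ random variables (each $\mu_{k(i)}$ is fresh because LLL-SP resamples after every topple). These summands are unbounded but have finite first three moments ($m=2(1+\log 2)$, $\sigma=2$), so the paper applies Berry--Esseen to compare $F_N$ with a normal, and the $CE^{-1/2}$ in the statement is the Berry--Esseen approximation error at $N=E/4$, not a Chebyshev tail as you guessed. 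A Chebyshev bound on the resulting normal then handles the remaining deviation and contributes only $O(E^{-1})$. Your martingale-with-filtration route could in principle be repaired --- compute the conditional mean and variance of $2\log Q_k$ given $\alpha_k$ and use a CLT for martingales with finite conditional moments --- but this is strictly harder than the paper's argument and unnecessary once you have the $\log\mu_k^{-2}$ bound.
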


Observe that the familiar upper bound $O(n^2\log \max_i\|\b_i\|)$ on the number of required steps is equivalent to $O(E)$, with the implicit constant depending on $\delta$.

\begin{proof}
If the algorithm is terminated, then $E$ must have become less than
\begin{equation*}
\sum_{i=1}^{n} (n-i+1)(n-i)T/2,
\end{equation*}
(where $T := -\log \delta^{1/2} > 0$) which equals,
\begin{equation} \label{eq:enbound}
H:= \frac{T}{6}(n^3 - n).
\end{equation}
Taking converse, we see that if $E$ is greater than \eqref{eq:enbound}, then LLL-SP has not yet terminated. At $k$-th toppling, $E$ decreases by at most $\log \mu_{k(i)}^{-2}$, where $k(i)$ is the index of the vertex in which $i$-th toppling occured. If toppled $N$ times, the decrease in $E$ is bounded by at most $F_N := \sum_{i=1}^N \log \mu_{k(i)}^{-2}$. In sum,
\begin{equation} \label{eq:lowertimegoal}
\mathrm{Prob}(E - F_N > H)
\end{equation}
gives the lower bound on the probability that LLL-SP is not terminated after $N$ swaps. Hence, it suffices to show that \eqref{eq:lowertimegoal} is bounded from below by $1-CE^{-1/2}$ when $N = E/2$.

The central limit theorem is applicable on $F_N$, since $\mu_{i(k)}$ are i.i.d.
More precisely, we apply the Berry-Esseen theorem, which asserts the following. Suppose we have i.i.d. random variables $X_1, X_2, \ldots$, so that $m = \mathbb{E}(X_1)$, $\sigma = (\mathbb{E}(X_1^2) - \mathbb{E}(X_1)^2)^{1/2}$, and $\rho = \mathbb{E}(X_1^3)$ are all finite. Furthermore, let $Y_N = \sum_{i=1}^N X_i$, and let $G_N(x)$ be the cumulative distribution function of $Y_N$, and $\Phi_N(x)$ be the cumulative distribution function of the normal distribution $N(Nm, N\sigma^2)$. Then for all $x$ and $N$,
\begin{equation*}
\left| G_N(x) - \Phi_N(x) \right| = O(N^{-1/2}),
\end{equation*}
where the implied constant depends on $m, \sigma, \rho$ only.

We let $X_i = \log \mu_{k(i)}^{-2}$ so that $F_N = G_N$, and apply the Berry-Esseen. It is easy to compute and check that $m, \sigma, \rho$ are all finite e.g. $m = 2(1+\log 2) \approx 3.386$ and $\sigma = 2$. Then, for a random variable $\mathcal{N}_N \sim N(Nm, N\sigma^2)$, \eqref{eq:lowertimegoal} is bounded by
\begin{equation*}
\mathrm{Prob}(E - \mathcal{N}_N > H)
\end{equation*}
plus an error of $O(N^{-1/2})$.

Now choose $N = E/4$, so that $\mathcal{N}_N \sim N((1+\log 2)E/2, E)$. Using Chebyshev's inequality we can prove
\begin{equation*}
\mathrm{Prob}(\mathcal{N}_N \geq 0.9E) \leq O(E^{-1}),
\end{equation*}
where the implied constant is absolute. Thus if $E$ is large enough so that $E - H > 0.9E$, we have that \eqref{eq:lowertimegoal} is at least $1 - CE^{-1/2}$ for some $C > 0$, as desired. With marginally more effort, it is possible to determine an explicit value for $C$.

\end{proof}

\begin{remark}
1. We can use the same idea to obtain a lower bound on the average RHF of LLL-SP, but it turns out to be slightly less than $1$, which happens to be useless in the context we are in.

2. There exists a central limit theorem for a strong mixing process (\cite{B95}), and also a central limit theorem for a sequence of independent but non-identical sequence of random variables (e.g. the Lyapunov CLT). Conjecture \ref{conj:mu} states that the $|\mu_{k(i)}|$ of LLL is strong mixing (weaker than independent) and non-identical (though contained in a compact set). We do not know whether there exists a central limit theorem that applies in this context, though we suspect that there should be.

\end{remark}

\subsection{The optimal LLL problem}

The optimal LLL problem (see e.g. \cite{A00}) asks whether LLL with the optimal parameter $\delta = 3/4$ terminates in polynomial time. The following theorem, while crude, shows that this is true for LLL-SP with arbitrarily high probability.

\begin{theorem} \label{thm:optimal}
For any $\eta > 0$ small, LLL-SP with $\delta = 3/4$ terminates after $O_\eta(E)$ steps with probability $1 - \eta$.
\end{theorem}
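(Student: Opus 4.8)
The plan is to mimic the structure of the proof of Theorem \ref{thm:lowertime}, but upgrade the lower tail estimate on the energy drop $F_N = \sum_{i=1}^N \log \mu_{k(i)}^{-2}$ from a central-limit bound into a large-deviation bound, so that after $N = O_\eta(E)$ topplings the algorithm has \emph{with high probability} expended enough energy to have terminated. With $\delta = 3/4$ we have $T = -\log\delta^{1/2} = \tfrac12\log(4/3)$, so the threshold $H$ from \eqref{eq:enbound} is still $\Theta(n^3)$ and the familiar $O(E)$ upper bound on the number of steps is still what we are shooting to match. Recall that each time LLL-SP topples vertex $k(i)$, the log-energy $E$ drops by exactly $\log Q_{k(i)}^{-2} = \log(\alpha_{k(i)}^{-2} + \mu_{k(i)}^2) \ge \log\mu_{k(i)}^{-2}$ (wait — note $\alpha_{k(i)}^{-2} < \delta < 1$ at a toppling, so in fact the drop is at least $\log(\mu_{k(i)}^{-2})$ only when $\mu$ is not too large; one must be a touch careful, but the bound $\log Q_{k(i)}^{-2} \ge \log(1/\delta) > 0$ always holds, and this deterministic positivity is the crude fallback). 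The point is that $F_N$ is a sum of i.i.d. positive random variables with mean $m = 2(1+\log 2)$ and all moments finite — indeed the moment generating function $\mathbb{E}[e^{t\log\mu_{k}^{-2}}] = \mathbb{E}[|\mu_k|^{-2t}]$ is finite for $t < 1/2$ since $\mu_k$ is uniform on $[-1/2,1/2]$, so Cramér's theorem applies on the whole relevant range.

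First I would fix the target: we want $N$ and a constant $c < m$ so that $\mathrm{Prob}(F_N \le cN)$ is at most $\eta$, and simultaneously $cN > E$ (so that the accumulated energy drop would overshoot the actual available energy $E - H < E$, forcing termination before step $N$). By Cramér/Chernoff, $\mathrm{Prob}(F_N \le cN) \le e^{-N \Lambda^*(c)}$ where $\Lambda^*$ is the Legendre transform of the cumulant generating function of $\log\mu_k^{-2}$; for any fixed $c < m$ we have $\Lambda^*(c) > 0$. So choosing $c = m/2$ (say) and then $N = \lceil 2E/m \rceil = O(E)$ gives both $cN \ge E$ and $\mathrm{Prob}(F_N \le cN) \le e^{-\Lambda^*(m/2)\, N} \le e^{-\Omega(E)}$, which is $\le \eta$ once $E$ is large enough. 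For small $E$ the statement is trivial since LLL-SP terminates in finitely many steps deterministically (the energy is a nonnegative quantity decreasing by at least $\log(1/\delta)$ each step — or one simply absorbs the finitely many small-$E$ cases into the implied constant $O_\eta(\cdot)$). This yields: with probability $\ge 1-\eta$, LLL-SP terminates within $N = O_\eta(E)$ steps, which is exactly the claim. Note that unlike Theorem \ref{thm:lowertime}, here the dependence of the implied constant on $\eta$ enters only through how large $E$ must be for $e^{-\Omega(E)} \le \eta$, hence the notation $O_\eta(E)$.

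The main obstacle I anticipate is not probabilistic but the bookkeeping around the inequality $\log Q_{k(i)}^{-2} \ge \log \mu_{k(i)}^{-2}$: this is false when $|\mu_{k(i)}|$ is close to $1/2$ and $\alpha_{k(i)}^{-2}$ is close to $\delta$, since then $\alpha^{-2} + \mu^2$ can slightly exceed $1$ but $\mu^{-2} = 4$. One fix is to lower-bound the per-step energy drop by $\min(\log\mu_{k(i)}^{-2},\ \log(1/\delta))$ — still an i.i.d. sequence (a bounded function of $\mu_{k(i)}$), still with positive mean, so the Cramér argument goes through verbatim with a slightly smaller mean $m'$; one then takes $c = m'/2$. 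A second, subtler point is ensuring the ``if it hasn't terminated'' accounting is watertight: $F_N$ bounds the \emph{total} decrease of $E$ over the first $N$ topplings \emph{conditioned on} $N$ topplings actually occurring, so one should phrase it as: on the event that the algorithm runs for at least $N$ steps, the energy after $N$ steps is $E - (\text{sum of } N \text{ i.i.d. drops})$, and if that is $< H$ we have a contradiction; since the $\mu$'s resampled at each step are independent of the run's history, the i.i.d. structure of the first-$N$-step drops is genuine. Modulo these two routine-but-real care points, the proof is a direct application of a Chernoff bound in place of the Berry–Esseen bound used for Theorem \ref{thm:lowertime}.
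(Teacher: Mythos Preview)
Your overall plan---replace the Berry--Esseen estimate by a Chernoff/Cram\'er large-deviation bound on an i.i.d.\ sum of per-step energy drops---is exactly the right idea, and is essentially what the paper does. But you have the key inequality pointing the wrong way throughout. The per-step energy decrease is $2\log Q_{k(i)} = -\log(\alpha_{k(i)}^{-2}+\mu_{k(i)}^2)$, and since $\alpha_{k(i)}^{-2}>0$ this is always \emph{strictly less} than $\log\mu_{k(i)}^{-2}$. Thus $F_N=\sum\log\mu_{k(i)}^{-2}$ is an \emph{upper} bound on the total energy lost, which is precisely why it was useful for the \emph{lower} time bound in Theorem~\ref{thm:lowertime} and is useless here. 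Your ``crude fallback'' $\log(1/\delta)$ is not a lower bound either: for $\delta=3/4$ the drop $-\log(\alpha_k^{-2}+\mu_k^2)$ can be made arbitrarily close to $0$ by taking $\alpha_k^{-2}\uparrow 3/4$ and $|\mu_k|\uparrow 1/2$. The absence of any deterministic positive lower bound on the per-step drop is exactly the content of the optimal LLL problem, so you cannot sidestep it. For the same reason your proposed patch $\min(\log\mu_{k(i)}^{-2},\log(1/\delta))$ in the last paragraph is also not a valid lower bound (both terms exceed the drop when $|\mu_k|$ is near $1/2$).

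The repair is easy once you pick the correct random variable. Since $\alpha_{k(i)}^{-2}<\delta$ at a legal toppling, the drop is at least $X_i:=\log(\delta+\mu_{k(i)}^2)^{-1}=-\log(3/4+\mu_{k(i)}^2)$, an i.i.d.\ sequence of \emph{bounded} variables in $[0,\log(4/3)]$ with strictly positive mean $m'=2\int_0^{1/2}-\log(3/4+u^2)\,du>0$. Now your Cram\'er argument goes through verbatim with $m'$ in place of $m$: take $N=\lceil 2E/m'\rceil$ and apply Chernoff to $\sum_{i\le N} X_i$. The paper opts for an even more elementary version of this: fix $\varepsilon>0$, observe that on the event $|\mu_{k(i)}|\le(1-\varepsilon)/2$ (which has probability $1-\varepsilon$) the drop is at least $d:=\log\bigl(3/4+(1-\varepsilon)^2/4\bigr)^{-1}>0$, run $10E/d$ steps, and use a binomial tail bound to show that with probability arbitrarily close to $1$ at least $E/d$ of those steps are ``good'', which forces termination.
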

\begin{proof}

Write $\mu$ for the random variable uniformly distributed in $[0, 1/2]$. In case $\delta < 3/4$, the complexity bound of LLL is established with the observation that, with each swap, the energy $E$ decreases by at least $c := \log(\delta + 1/4)^{-1} > 0$, and thus the algorithm must terminate within $E/c$ steps. Similarly, in case $\delta = 3/4$, we try to show that the minimum change of energy $\log(\delta + \mu^2)^{-1}$ is strictly bounded away from zero almost all the time.

(If $I$ was the increment for a given toppling operation, it is easy to show that the energy decreases by $2I$ after such a step.)

Choose a small $\varepsilon > 0$, and let $p = \mathrm{Prob}(\mu \leq 1/2(1-\varepsilon)) = 1- \varepsilon$. Let $d = \log(3/4 + p^2/4)^{-1}$, which is the minimum possible change in energy provided $\mu \leq 1/2(1-\varepsilon)$. Now take $10E/d$ samples $\mu_1, \mu_2, \ldots$ of $\mu$ (there is nothing special about the constant $10$ here). If at least $E/d$ of those samples are less than $1/2(1-\varepsilon)$, LLL-SP would terminate. Proving that this probability is arbitrarily close to $1$ is now a simple exercise with the binomial distribution.

\end{proof}

Observe that the above proof carries over to the case of LLL assuming Conjecture \ref{conj:mu}; the compactness condition on the $\mu_{k(i)}$ distributions allows control on the probability that they are all simultaneously bounded away from $1/2(1-\varepsilon)$.

\end{document}